\newtheorem{theorem}{Theorem}
\newtheorem*{example*}{Example}
\numberwithin{equation}{section}
\newtheorem{definition}{Definition}
\newtheorem{lemma}{Lemma}
\newtheorem{proposition}{Proposition}
\newtheorem{remark}{Remark}
\newtheorem{assumption}[theorem]{Assumption}
\newcommand{\mb}{\boldsymbol}
\renewcommand{\bm}{\boldsymbol}
\newcommand{\mc}{\mathcal}
\newcommand{\mf}{\mathfrak}
\newcommand{\mcb}[1]{\mb{\mc{#1}}}
\newcommand{\norm}[2]{\left\| #1 \right\|_{#2}}
\newcommand{\reals}{\mathbb{R}}
\newcommand{\<}{\langle}
\renewcommand{\>}{\rangle}
\newcommand{\innerprod}[2]{\left\< #1, #2 \right\>}
\newcommand{\set}[1]{\left\{ #1 \right\}}
\newlength{\dhatheight}
\newcommand{\diag}[1]{\mathrm{diag}\left( #1 \right)}
\renewcommand\t{{\ensuremath{\scriptscriptstyle{\top}}}}
\numberwithin{equation}{section}
\newcommand{\mr}[1]{\mathbb{#1}}
\begin{document}

\title{Using Negative Curvature in Solving Nonlinear Programs}
\author{Donald Goldfarb$^1$, Cun Mu$^1$, John Wright$^2$, Chaoxu Zhou$^1$ \\ \\
$^1$Department of Industrial Engineering and Operations Research, Columbia University\\
$^2$Department of Electrical Engineering, Columbia University
}
\maketitle

\begin{abstract}
Minimization methods that  search along a curvilinear path composed of a non-ascent negative curvature direction in addition to the direction of steepest descent, dating back to the late 1970s, 
have been an effective approach to finding a stationary point of a function at which its Hessian is positive semidefinite. For constrained nonlinear programs arising from recent applications, the primary goal is to find a stationary point that satisfies the second-order necessary optimality conditions. Motivated by this, we generalize the approach of using negative curvature directions from unconstrained optimization to nonlinear ones. We focus on equality constrained problems and prove that our proposed negative curvature method is guaranteed to converge to a stationary point satisfying second-order necessary conditions. A possible way to extend our proposed negative curvature method to general nonlinear programs is also briefly discussed.
\end{abstract}

\section{Introduction}

This paper is concerned with solving general smooth nonlinear optimization problems. Considering the difficulty of obtaining a global optimal solution, our goal is to develop an efficient method to locate a stationary point that satisfies the second-order necessary conditions. This goal is motivated by recent applications and developments, where stationary points satisfying second-order necessary conditions are of primary interest. Specifically, in many nonlinear programs arising from dictionary learning \cite{SQW152}, tensor decomposition \cite{GHJY15}, phase retrieval \cite{SQW16}, and semidefinite programming \cite{BM03},  stationary points satisfying the second-order necessary conditions are the targets of the application and can often be proven to be global optimizers.

Even though the scope of our concern is more general than unconstrained nonlinear programs, studying such problems helps us to better understand how to achieve our goal. To find a stationary point of an unconstrained function satisfying second-order necessary conditions, apparently, using gradient information alone in a deterministic method is not sufficient as such methods can be trapped by saddle points. Therefore, second-order information should be utilized in (hopefully) some computationally efficient manner. Methods, e.g. {\em Newton's method} and {\em trust-region methods}, that involve computing the inverse, Cholesky factorization or eigen-decomposition of the Hessian or some modifications of it, might become impractical when the number of variables grow to tens of thousands. As a remedy, methods that utilize negative curvature directions were proposed in \cite{MC77, MS79, GOLD80} and later further actively developed in \cite{grandinetti1984nonlinear, martinez1990algorithm, ferris1996nonmonotone, lucidi1998curvilinear, gould2000exploiting, conforti2001curvilinear, apostolopoulou2010curvilinear} based on inexpensive computations involving the Hessian. Specifically, these methods imitate the steepest descent method but also incorporate a negative curvature direction besides the negative of the gradient when computing the step to take at each iteration. Consequently, saddle points are usually avoided and these methods can be guaranteed to converge to {\em a stationary point satisfying the second-order necessary conditions} without requiring much additional computational cost.

Inspired by the unconstrained case, we {\em generalize the negative curvature approach to equality constrained nonlinear programs}. In order to accomplish this, one natural question is how to generalize the notions of the gradient and the Hessian when constraints are imposed. For unconstrained problems, we can regard the gradient and the Hessian as quantities that characterize local optimality. Specifically, according to the first and second-order necessary optimality conditions, a local minimizer must have a zero gradient and a positive semidefinite Hessian. For constrained nonlinear programs, it seems plausible that a generalized gradient and Hessian could be developed based on the first-order and second-order optimality conditions. Essentially, this is the approach we adopt here. Our generalized gradient and Hessian are closely related to the Riemannnian gradient and Hessian in the context of optimization on Riemannian manifolds \cite{petersen2006riemannian, AMS09, absil2013extrinsic,  absil2009all}. This explicit connection is discussed in the paper. Our approach of deriving generalized gradient and Hessian from the classical optimality point of view is quite well-motivated and self-contained, and moreover keeps the technical difficulty to a minimum level.

\paragraph{Organization.} The rest of the paper is organized as follows. In Section \ref{sec:prelim}, we review some basic results on equality constrained optimization. In Section \ref{sec:eq}, we present our negative curvature line search method for equality-constrained nonlinear programs in particular. The notions of gradient and Hessian are generalized based on local optimality conditions. Using these generalizations, we propose and study a negative curvature method for equality constrained problem. Last, we briefly discuss how to extend the negative curvature method proposed for the equality constrained problem to general nonlinear programming problems.

\section{Preliminaries}\label{sec:prelim}

In this section, we review several fundamental results regarding the equality-constrained problem:
\begin{flalign} \label{eqn:problem_to_solve}
\text{minimize} \quad f(\bm x) \qquad
\text{subject to} \quad  c_i(\bm x) = 0, \;\;\; i\in \set{1, 2, \ldots, m},
\end{flalign}
where the objective function $f:\reals^n \to \reals$ and constraint functions $c_i:\reals^n \to \reals$ are smooth so that it is possible to characterize local optimality conditions based on their derivatives. The results and notation in this section are quite classical \cite{nocedal2006numerical}.
Regarding the feasible set $\Omega:=\set{\bm x\in \reals^n \;\vert\; c_i(\bm x) = 0}$, we first impose a regularity condition which is known as LICQ in optimization literature.
\begin{definition}[LICQ]
We say that the linear independence constraint qualification (LICQ) holds at $\bm x \in \Omega$, if the set of constraint gradients $\set{\nabla c_1(\bm x), \nabla c_2{(\bm x)}, \ldots, \nabla c_m{(\bm x)}}$ are linearly independent, i.e. the matrix $\nabla \mb{c}({\bm x}) := [\nabla c_1(\bm x), \nabla c_2{(\bm x)}, \ldots, \nabla c_m{(\bm x)}] \in \reals^{n \times m}$ has full column rank.
\end{definition}

From a geometric point of view, the LICQ condition guarantees ${\Omega}$ to be a differentiable manifold of dimension $n-m$. Now we are ready to state the definitions of tangent and normal subspaces, which are particularly useful geometry concepts to characterize the variational properties of $f(\cdot)$.
\begin{definition}[Tangent and Normal Subspaces]
The normal subspace $\mc N_{\Omega}(\bm x)$ at a point $\bm x \in \Omega$ is defined as the subspace spanned by the set of constraint gradients $\set{\nabla c_1(\bm x), \ldots, \nabla c_m{(\bm x)}}$, i.e. $\mc N_{\Omega}(\bm x):= \mf R(\nabla\mb c({\bm x})) \subseteq \reals^n$ the range of the matrix $\nabla\mb c({\bm x})$. We denote by $\mc P_{\mc N_{\Omega}(\bm x)}$ projection onto the normal subspace $\mc N_{\Omega}(\bm x)$.

The tangent subspace  $\mc T_{\Omega}(\bm x)$ at a point $\bm x \in \Omega$ complements the normal subspace in $\reals^n$, i.e. $\mc T_{\Omega}(\bm x) = \mc N^\perp_{\Omega}(\bm x) = \mf R(\nabla \mb c({\bm x}))^\perp = \mf N(\nabla \mb c({\bm x})^\top)$, is the nullspace of the matrix $\nabla \mb c({\bm x})^\top$.
We denote by $\mc P_{\bm x}$ the projection onto the tangent subspace $\mc T_{\Omega}(\bm x)$.
\end{definition}
\begin{remark}
We may drop the subscripts $\Omega$ and $\bm x$, respectively, from $\mc N_{\Omega}(\bm x)$ and $\mc T_{\Omega}(\bm x)$ if they are clear from the context.
\end{remark}

  We are now fully prepared to state the first-order and second-order necessary optimality conditions for problem (\ref{eqn:problem_to_solve}) based on the Lagrangian function,
\begin{flalign}
\mc L(\bm x, \bm \lambda):= f(\bm x) - \sum_{i = 1}^m\; \lambda_i \bm c_i(\bm x),
\end{flalign}
where $\lambda_i$ is the Lagrange multiplier associated with the $i$-th constraint $c_i(\bm x) = 0$.

\begin{theorem}[Necessary Optimality Conditions \cite{nocedal2006numerical}]
Suppose that $\bm x^\star$ is a local minimizer of problem \eqref{eqn:problem_to_solve} and satisfies the LICQ condition.
Then $\mb{x}^\star$ satisfies the following first-order and second-order conditions
\begin{flalign}
& \mc{G}(\bm x^\star):=\nabla_{\bm x} \mc L\left(\bm x^\star, \bm \lambda^\star\left(\bm x^\star\right)\right)  = \bm 0 \label{eqn:first_order_opt}\\
& \mc{H}(\bm x^\star):=  \mc{P}_{\bm x^\star}^\top\nabla_{\bm x\bm x}^2 \mc  L\left(\bm x^\star, \bm \lambda^\star\left(\bm x^\star \right)\right)\mc{P}_{\bm x^\star} \succeq \mb 0
\label{eqn:second_order_opt}
\end{flalign}
where $\bm \lambda^\star(\bm x^\star) \in \arg \min_{\bm \lambda}\; \norm{\nabla_{\bm x} \mc L(\bm x^\star, \bm \lambda)}{}.$
\end{theorem}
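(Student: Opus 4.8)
The plan is to reduce both assertions to the classical argument that restricts $f$ to smooth feasible curves through $\bm x^\star$, after first making the distinguished multiplier $\bm\lambda^\star(\bm x^\star)$ explicit. Since $\nabla_{\bm x}\mathcal L(\bm x^\star,\bm\lambda)=\nabla f(\bm x^\star)-\nabla\bm c(\bm x^\star)\bm\lambda$, minimizing $\norm{\nabla_{\bm x}\mathcal L(\bm x^\star,\bm\lambda)}{}$ over $\bm\lambda$ is a linear least-squares problem whose solution is unique under LICQ and equals $\bm\lambda^\star(\bm x^\star)=(\nabla\bm c(\bm x^\star)^\top\nabla\bm c(\bm x^\star))^{-1}\nabla\bm c(\bm x^\star)^\top\nabla f(\bm x^\star)$; equivalently $\nabla\bm c(\bm x^\star)\bm\lambda^\star(\bm x^\star)=\mathcal P_{\mathcal N_\Omega(\bm x^\star)}\nabla f(\bm x^\star)$. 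Hence $\mathcal G(\bm x^\star)=(\bm I-\mathcal P_{\mathcal N_\Omega(\bm x^\star)})\nabla f(\bm x^\star)=\mathcal P_{\bm x^\star}\nabla f(\bm x^\star)$ is precisely the tangential component of $\nabla f(\bm x^\star)$, so \eqref{eqn:first_order_opt} is equivalent to the statement $\nabla f(\bm x^\star)\perp\mathcal T_\Omega(\bm x^\star)$.

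The technical core is a feasible-curve lemma: for every $\bm d\in\mathcal T_\Omega(\bm x^\star)$ there exist $\epsilon>0$ and a $C^2$ map $\bm x:(-\epsilon,\epsilon)\to\Omega$ with $\bm x(0)=\bm x^\star$ and $\dot{\bm x}(0)=\bm d$. I would produce this via the implicit function theorem: look for the curve in the form $\bm x(t)=\bm x^\star+t\bm d+\nabla\bm c(\bm x^\star)\bm y(t)$ and solve $\bm c(\bm x(t))=\bm 0$ for $\bm y(t)$; the Jacobian with respect to $\bm y$ at $(t,\bm y)=(0,\bm 0)$ is $\nabla\bm c(\bm x^\star)^\top\nabla\bm c(\bm x^\star)$, which is invertible by LICQ, so a $C^2$ solution $\bm y(\cdot)$ with $\bm y(0)=\bm 0$ exists near $0$; differentiating $\bm c(\bm x(t))=\bm 0$ at $t=0$ and using $\nabla\bm c(\bm x^\star)^\top\bm d=\bm 0$ forces $\dot{\bm y}(0)=\bm 0$, hence $\dot{\bm x}(0)=\bm d$ as required.

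With the lemma in hand, both conclusions follow from elementary Taylor expansion. For \eqref{eqn:first_order_opt}: fixing $\bm d\in\mathcal T_\Omega(\bm x^\star)$ and its curve, $g(t):=f(\bm x(t))$ has a local minimum at $t=0$ because $\bm x^\star$ is a local minimizer over $\Omega$, so $0=g'(0)=\nabla f(\bm x^\star)^\top\bm d$; since $\bm d$ is arbitrary, $\nabla f(\bm x^\star)\in\mathcal N_\Omega(\bm x^\star)$, which by the first paragraph is exactly $\mathcal G(\bm x^\star)=\bm 0$. For \eqref{eqn:second_order_opt}: set $h(t):=\mathcal L(\bm x(t),\bm\lambda^\star(\bm x^\star))$; since $c_i(\bm x(t))\equiv 0$ along the curve, $h(t)=f(\bm x(t))$, so $h''(0)\ge 0$, and the chain rule gives $h''(0)=\bm d^\top\nabla_{\bm x\bm x}^2\mathcal L(\bm x^\star,\bm\lambda^\star(\bm x^\star))\bm d+\nabla_{\bm x}\mathcal L(\bm x^\star,\bm\lambda^\star(\bm x^\star))^\top\ddot{\bm x}(0)$, whose last term vanishes by \eqref{eqn:first_order_opt}. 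Thus $\bm d^\top\nabla_{\bm x\bm x}^2\mathcal L(\bm x^\star,\bm\lambda^\star(\bm x^\star))\bm d\ge 0$ for all $\bm d\in\mathcal T_\Omega(\bm x^\star)$; since $\mathcal P_{\bm x^\star}$ is the orthogonal projector onto $\mathcal T_\Omega(\bm x^\star)$ (so $\mathcal P_{\bm x^\star}^\top=\mathcal P_{\bm x^\star}$ and $\mathcal P_{\bm x^\star}\bm v\in\mathcal T_\Omega(\bm x^\star)$ for every $\bm v$), this is equivalent to $\mathcal H(\bm x^\star)=\mathcal P_{\bm x^\star}^\top\nabla_{\bm x\bm x}^2\mathcal L(\bm x^\star,\bm\lambda^\star(\bm x^\star))\mathcal P_{\bm x^\star}\succeq\bm 0$.

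The one step that demands genuine care is the feasible-curve lemma — everything else is bookkeeping with one- and two-variable derivatives. The subtlety there is twofold: invoking LICQ at exactly the right place so the implicit-function-theorem hypotheses hold, and verifying that the constructed curve has derivative exactly $\bm d$ at $t=0$ (not merely that it stays in $\Omega$), since otherwise the quadratic form extracted from $h''(0)$ would not be the one appearing in \eqref{eqn:second_order_opt}. If one prefers to avoid the explicit construction, this lemma can instead be quoted from \cite{nocedal2006numerical}, but I would include the short implicit-function-theorem argument to keep the development self-contained.
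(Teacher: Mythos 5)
Your proposal is correct. Note, however, that the paper does not prove this theorem at all: it is stated as a quoted classical result with the citation to \cite{nocedal2006numerical}, so there is no in-paper argument to compare against. Your self-contained proof is sound and is essentially the standard textbook development: the identification $\mc G(\bm x^\star)=\mc P_{\bm x^\star}\nabla f(\bm x^\star)$ via the normal equations is exactly the identity the paper itself uses later (its equation for $\bm\lambda^\star(\bm x)$ in the proof of Lemma 3), the implicit-function-theorem construction of a $C^2$ feasible curve with prescribed tangent $\bm d$ is rigorous under LICQ, and the first- and second-order conclusions follow correctly from $g'(0)=0$ and $h''(0)\ge 0$ with the $\nabla_{\bm x}\mc L^\top\ddot{\bm x}(0)$ term killed by the first-order condition. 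The one presentational difference from the reference you would be replacing is that Nocedal and Wright argue via feasible sequences and the tangent cone (showing under LICQ that the tangent cone equals the null space of $\nabla\bm c(\bm x^\star)^\top$), whereas you construct explicit smooth curves; for smooth constraints under LICQ the two are equivalent, and your route is arguably cleaner for extracting the quadratic form in \eqref{eqn:second_order_opt}. You also correctly handle the paper's nonstandard convention of defining $\bm\lambda^\star$ as the least-squares multiplier rather than the KKT multiplier; these coincide at a local minimizer satisfying LICQ, which your first paragraph implicitly establishes.
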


  Feasible points satisfying conditions \eqref{eqn:first_order_opt} and \eqref{eqn:second_order_opt} are typically referred as {\em second-order critical points}. The gist of our paper is to design iterative numerical methods to locate them.  We next briefly describe examples of optimization problems that arise in signal processing, machine learning and statistics where such solutions are essentially sought.

\paragraph{Symmetric Orthogonal Tensor Decomposition (SOTD).} Tensor is a multidimensional array and the symmetric orthogonal tensor decomposition (SOTD) naturally generalizes the spectral decomposition of a symmetric matrix. Here, we focus on the $p$-way $n$-dimensional symmetric orthogonal (SOD \cite{mu2015successive, wang2017tensor, mu2017}) tensor
\begin{flalign}
\mcb T = \sum_{i=1}^n \underbrace{\bm v_i\otimes \bm v_i \otimes \cdots \otimes \bm v_i}_{p\; times} \in \reals^{\overbrace{n \times n \times \cdots \times n}^{p \; times}},
\end{flalign}
where $\mb V = [\bm v_1, \bm v_2, \ldots, \bm v_n] \in \reals^{n \times n}$ is an orthogonal matrix and $\otimes$ denotes the usual outer product, so the $(i_1, i_2, \ldots, i_p)$-th entry of $\bm v \otimes \bm v \otimes \cdots \otimes \bm v$ is the scalar $v_{i_1} v_{i_2}\cdots v_{i_p}$. The problem addressed by SOTD, is to find (up to sign and permutation) the components $\bm v_i$'s given $\mcb T$, has many applications, including higher-order statistical estimation \cite{mccullagh1987tensor}, independent component analysis \cite{comon1994independent, comon2010handbook}, and parameter estimation for latent variable models \cite{anandkumar2014tensor}).

  Ge et al. \cite{GHJY15} consider the SOTD specifically for the $p=4$ case and analyze the geometry of the following minimization problem
\begin{flalign}\label{eqn:tensor_opt}
\min_{\bm x \in \reals^n} \quad T(\bm x, \bm x, \bm x, \bm x) = \sum_{i=1}^n\sum_{j=1}^n\sum_{k=1}^n\sum_{l=1}^n \mc T_{ijkl} x_i x_j x_k x_l \qquad \mbox{s.t.} \quad \norm{\bm x}{2} = 1.
\end{flalign}
They prove that for problem \eqref{eqn:tensor_opt}, the points satisfying the second-order necessary condition coincide with the component $\bm v_i$'s. Specifically, they show that
\begin{flalign}
\set{\bm x \in \reals^n \; \vert \; G(\bm x) = 0,\; H(\bm x) \succeq \mb 0} = \set{\pm \bm v_1, \ldots, \pm \bm v_n}.
\end{flalign}
Therefore, solving problem \eqref{eqn:tensor_opt} yields one component $\bm {v}_i$, and after that, one can apply standard deflation procedures to obtain the others one by one.

  Ge et al. \cite{GHJY15} further propose a larger optimization problem
\begin{flalign}\label{eqn:tensor_opt_2}
\min_{[\bm x_1, \bm x_2, \ldots, \bm x_n] \in \reals^{n \times n}} \quad \sum_{i = 1}^n \sum_{ j \neq i} \mcb T(\bm x_i, \bm x_i, \bm x_j, \bm x_j)\qquad \mbox{s.t.} \quad \norm{\bm x_i}{2} = 1, \quad \; i = 1, \ldots, n,
\end{flalign}
to find all the components at one shot. Although problem \eqref{eqn:tensor_opt_2} is substantially more complicated than problem \eqref{eqn:tensor_opt}, similar geometrical phenomenon and the advantageous property of (\ref{eqn:tensor_opt}) are preserved. Specifically, Ge et al. \cite{GHJY15} proves that any solution of problem \eqref{eqn:tensor_opt_2} that satisfies the second-order necessary conditions in Theorem 1 is a signed and permuted version of $[\bm v_1, \bm v_2, \ldots, \bm v_n] \in \reals^{n \times n}$.
\paragraph{Semidefinite Programming (SDP).} SDP is one of the most exciting developments in mathematical optimization and has been successfully applied to model and solve problems in traditional convex constrained optimization, control theory, and combinatorial optimization. In general, an SDP problem can be defined as
\begin{flalign}\label{eqn:sdp_opt}
\min_{\mb X \in \mc S_+^{n \times n}}\quad \mbox{Tr}(\mb C \mb X) \qquad \mbox{s.t.} \quad \mbox{Tr}(\mb A_i \mb X) = b_i \quad \; i = 1, \ldots, m,
\end{flalign}
where $\mc S_+^{n \times n}$ is the cone of symmetric positive semidefinite matrices.

  Although interior-point methods can solve SDP problems in polynomial time, scalability is a problem and in practice interior-point methods run out of memory and time once $n$ is greater than $1e3$. To address this issue, Burer and Monteiro  \cite{burer2003nonlinear, burer2005local}, take advantage of the low-rank structure of SDP optimal solutions characterized by Pataki \cite{pataki1998rank} and Barvinok \cite{barvinok1995problems}. Explicitly they replaced $\mb X = \mb L \mb L^\top$ where $\mb L$ has size $n\times p$, which leads to the following non-convex but low-dimensional surrogate:
\begin{flalign}\label{eqn:sdp_opt_low_rank}
\min_{\mb L \in \reals^{n \times p}}\quad \mbox{Tr}(\mb L^\top \mb C \mb L) \qquad \mbox{s.t.} \quad \mbox{Tr}(\mb L^\top \mb A_i \mb L) = b_i \quad  \; i = 1, \ldots, m.
\end{flalign}
Burer and Monteiro  \cite{burer2003nonlinear, burer2005local} also applied local optimization methods to solve the above general nonlinear program \eqref{eqn:sdp_opt_low_rank} with surprisingly good performance, even though the theoretical justifications between problems \eqref{eqn:sdp_opt} and \eqref{eqn:sdp_opt_low_rank} were unclear.

  Recently, Boumal et al. \cite{boumal2016non} proved that whenever $p$ satisfies $p(p+1) > 2m$ (and some other technical but mild conditions hold), problem \eqref{eqn:sdp_opt_low_rank} is equivalent to problem \eqref{eqn:sdp_opt} in the sense that for any {\em second-order critical point} of \eqref{eqn:sdp_opt_low_rank} $\mb L^\star$, the square matrix $\mb L^\star (\mb L^\star)^\top$ is optimal to problem \eqref{eqn:sdp_opt}.

  One straightforward but powerful application of the above discussion is to solve the SDP problem derived as relaxation for the max-cut problem. In their celebrated work \cite{goemans1995improved}, Goemans and Williamson tackled the NP-hard max-cut problem by the following SDP
\begin{flalign}\label{eqn:sdp_opt_mc}
\min_{\mb X \in \mc S_+^{n \times n}}\quad \mbox{Tr}(\mb C \mb X) \qquad \mbox{s.t.} \quad \diag{\mb X} = \bm 1,
\end{flalign}
whose optimal solution (after rounding) will yield an approximate solution to max-cut within a ratio of .878. The corresponding Burer-Monteiro reformulation can be written as
\begin{flalign}\label{eqn:sdp_opt_low_rank_mc}
\min_{\mb L \in \reals^{n \times p}}\quad \mbox{Tr}(\mb L^\top \mb C \mb L) \qquad \mbox{s.t.} \quad \norm{\bm e_i^\top \mb L}{} = 1 \quad \; i = 1, \ldots, n.
\end{flalign}
Based on the above discussion, when we choose $p = \lceil \sqrt{2n} \rceil$, even though problem \eqref{eqn:sdp_opt_low_rank_mc} has substantially smaller dimension than problem \eqref{eqn:sdp_opt_mc}, any second-order critical point $\mb L^\star$ of \eqref{eqn:sdp_opt_low_rank_mc} is able to recover a global optimal solution of \eqref{eqn:sdp_opt_mc} with $\mb X^\star = \mb L^\star (\mb L^\star)^\top$.

\hspace{5mm}

  There is a large body of recent literature, especially in signal processing and learning theory, where finding second-order critical points is the fundamental target for the application at hand. Interested readers can find examples in complete dictionary recovery \cite{SQW152}, generalized phase retrieval \cite{SQW16}, matrix completion \cite{ge2016matrix}, phase synchronization  and community detection \cite{boumal2016nonconvex, bandeira2016low}.

\section{Equality-Constrained Problem}\label{sec:eq}
In this section, we extend the definitions \eqref{eqn:first_order_opt} and \eqref{eqn:second_order_opt} of $\mc{G}(\bm{x}^\star)$ and $\mc{H}(\bm x^\star)$ at a local minimizer $\bm x^\star$ to a general feasible point $\bm x\in \Omega$, and verify that $\mc{G}(\bm x)$ and $\mc{H}(\bm x)$ can be regarded as natural generalizations of the conventional gradient and Hessian. Based on this, we extend the classical negative curvature algorithm from the unconstrained problem to the equality constrained one \eqref{eqn:problem_to_solve}, and prove this method converges to a second-order critical point of \eqref{eqn:problem_to_solve}.

\subsection{Generalized gradient and Hessian}\label{sub:generalized}
Based on Theorem 1, we note that $\mc{G}(\bm x^\star)$ and $\mc{H}(\bm x^\star)$ provide an elegant characterization of local optimality for the equality constrained problem in analogy with the roles of the gradient and Hessian for the unconstrained one. Inspired by this, we extrapolate the definitions of \eqref{eqn:first_order_opt} and \eqref{eqn:second_order_opt} from constrained local minimizers $\bm x^\star$ to any feasible point $\bm x \in \Omega,$ with the intention of using these generalized quantities as one uses the gradient and Hessian in the algorithmic design for unconstrained optimizations.

\begin{definition}[Generalized Gradient and Hessian]
For any feasible $\bm x \in \Omega$, let
\begin{flalign}\label{eqn:lambda_def}
\bm \lambda^\star(\bm x) \in \arg \min_{\bm \lambda}\; \norm{\nabla_{\bm x} \mc L(\bm x, \bm \lambda)}{}.
\end{flalign}
Then we define the generalized gradient at $\bm x$ as
\begin{flalign}\label{eqn:G}
&\mc{G}(\bm x):=\nabla_{\bm x} \mc L\left(\bm x, \bm \lambda^\star\left(\bm x\right)\right)
\end{flalign}
and the generalized Hessian at $\bm x$ as
\begin{flalign}\label{eqn:H}
\mc{H}(\bm x):= \mc{P}_{\bm x}^\top\nabla_{\bm x\bm x}^2 \mc  L\left(\bm x, \bm \lambda^\star\left(\bm x\right)\right)\mc{P}_{\bm x}.
\end{flalign}
\end{definition}
  In the following, we verify that $\mc{G}(\cdot)$ and $\mc{H}(\cdot)$ behave like the gradient and Hessian not only for a local optimal solution but also for any feasible point $\bm x \in \Omega$ in an approximate sense, under the mild conditions described below:

\begin{assumption}
\label{ass: key_ass}
\begin{description}
\item[(a)]\hspace{0.4cm} $\nabla f$, $\nabla^2 f$, $\nabla c_i$ and $\nabla^2 c_i$ are Lipschitz continuous over $\Omega$ with Lipschitz constants $L_{f, 1}$, $L_{f, 2}$, $L_{c_i, 1}$ and $L_{c_i, 2}$.
\item[(b)]\hspace{0.4cm} $\sup_{{\bm x} \in \Omega}\norm{\nabla f({\bm x})}{} \leq \gamma_{f, 1}$, $\sup_{{\bm x}\in \Omega}\norm{\nabla^2 f({\bm x})}{} \leq \gamma_{f, 2}$, $\sup_{{\bm x} \in \Omega}\norm{\nabla c_i({\bm x})}{} \leq \gamma_{c_i, 1}$ and $\\\sup_{{\bm x} \in \Omega}\norm{\nabla^2 c_i({\bm x})}{} \leq \gamma_{c_i, 2}$.
\item[(c)][$\sigma_0$-LICQ] \hspace{0.4cm}For every ${\bm x} \in \Omega$, $\sigma_{\text{min}}(\nabla \mb{c}({\bm x})) \ge \sigma_0$ for some $\sigma_0 > 0$, where $\sigma_{\text{min}}(\nabla \mb{c}({\bm x}))$ is the smallest singular value of the matrix $\nabla \mb{c}(\bm{x})$
\end{description}
\end{assumption}
\begin{remark}
The last assumption is slightly stronger that LICQ as LICQ only requires $\sigma_{\text{min}}(\nabla \mb{c}({\bm x})) > 0.$
\end{remark}

  We next prove a key lemma regarding $\mc{G}(\cdot)$ and $\mc{H}(\cdot)$. Loosely speaking, we show that for every $\mb{x} \in \Omega$, given a small perturbation $\bm \delta \in \mc{T}_\Omega(\mb{x})\subset  \reals^n$, $f({\bm x})+\mc{G}({\bm x})^\top{\bm \delta}$ and $f({\bm x})+\mc{G}({\bm x})^\top{\bm \delta} +\frac{1}{2}{\bm \delta}^\top \mc{H}({\bm x}){\bm \delta}$, respectively approximates $f(\bm x + \bm \delta)$ up to the first order and the second order. This result is crucial for our generalization of negative curvature methods. But one issue we need to fix in advance is that $\bm x + \bm \delta$ may possibly lie outside of $\Omega$ and thus the objective $f$ might not even be defined at $\bm x + \bm \delta$.  In order to resolve this infeasibility issue, we introduce the following projection operator:
\begin{definition}\label{def:proj_Omega}
For any $\bm y \in \reals^n$, we denote by $\Pi_{\Omega}(\bm y)\in \Omega$ a point that is closest to $\bm y$, i.e. $\Pi_{\Omega}(\bm y)\in \arg\min_{\bm x \in \Omega} \; \norm{\bm x - \bm y}{}$, where $\norm{\cdot}{}$ denotes the Euclidean norm.
\end{definition}


  We are now ready to state the key lemma.

\begin{lemma}\label{key_lemma}
Under Assumptions 2(b), 2(c) and 2(d),
for any ${\bm x}_0 \in \Omega$ and ${\bm \delta} \in \mc{T}_{\Omega}(\bm x_0)$, we have
\begin{flalign}
&\left|f(\Pi_\Omega({\bm x}_0 + {\bm \delta})) - f({\bm x}_0) -\mc{G}({\bm x}_0)^\top{\bm \delta} \right| \le C_0 \norm{{\bm \delta}}{}^2 ,\quad\text{and} \label{1st_order_taylor} \\
&\left|f(\Pi_\Omega({\bm x}_0 + {\bm \delta})) - f({\bm x}_0) -\mc{G}({\bm x}_0)^\top{\bm \delta} - \frac{1}{2}{\bm \delta}^\top \mc{H}({\bm x}_0){\bm \delta}\right| \le C_5 \norm{{\bm \delta}}{}^3 \label{key_taylor}
\end{flalign}
where $C_0 = \gamma_{f, 1}\{1 + \frac{\Gamma_1}{\sigma_0^2}\}\frac{4}{R^2} + 4(L_{f, 1} + \frac{\gamma_{f, 1}\sqrt{\Gamma_1\Lambda_1}}{\sigma_0^2} )$, $C_5 = 8C_1 + C_2 + C_3 + C_4, C_1 = \frac{L_{f, 2}}{2} + \frac{\gamma_{f, 1}}{2\sigma_0^2}\sqrt{\Gamma_1\Lambda_2}$, $C_2 = \{\gamma_{f, 2} + \frac{\gamma_{f, 1}}{2\sigma_0^2}\sqrt{\Gamma_1\Gamma_2}\}\frac{4}{R}$, $C_3 = \{\gamma_{f, 2} + \frac{\gamma_{f, 1}}{2\sigma_0^2}\sqrt{\Gamma_1\Gamma_2}\}\frac{2}{R}$, $C_4 = \{\gamma_{f, 1} + \frac{\gamma_{f, 1}}{\sigma_0^2}\Gamma_1\}(\frac{2\sqrt{\Gamma_1\Lambda_1}}{\sigma_0^2} + \frac{2\sqrt{\Gamma_1^3\Lambda_1}}{\sigma_0^4}) \frac{8}{R}$, $R = \sigma_0 / \sqrt{\Lambda_1}$,  $\Gamma_1 = \sum_{i=1}^m \gamma_{c_i, 1}^2$, $\Gamma_2 = \sum_{i=1}^m \gamma_{c_i, 2}^2$, $\Lambda_1 = \sum_{i=1}^m L_{c_i, 1}^2$, and $\Lambda_2 = \sum_{i=1}^m L_{c_i, 2}^2$.
\end{lemma}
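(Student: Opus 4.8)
The plan is to reduce both estimates to Taylor's theorem together with two exact identities that come from the tangency $\bm\delta\in\mc T_\Omega(\bm x_0)$; the only genuinely technical ingredient will be a quantitative control of the metric projection $\Pi_\Omega$ in a neighborhood of $\bm x_0$, and that is the step I expect to be the real obstacle. Throughout I write $\bm y:=\bm x_0+\bm\delta$ and $\bm x_1:=\Pi_\Omega(\bm y)\in\Omega$.

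First I would record the algebraic facts that do all the work. Since $\bm\delta\in\mc T_\Omega(\bm x_0)=\mc N_\Omega(\bm x_0)^\perp$ while $\mc G(\bm x_0)=\nabla f(\bm x_0)-\nabla\bm c(\bm x_0)\bm\lambda^\star(\bm x_0)$ differs from $\nabla f(\bm x_0)$ only by $\nabla\bm c(\bm x_0)\bm\lambda^\star(\bm x_0)\in\mc N_\Omega(\bm x_0)$, we get $\mc G(\bm x_0)^\top\bm\delta=\nabla f(\bm x_0)^\top\bm\delta$ exactly; likewise $\mc P_{\bm x_0}\bm\delta=\bm\delta$, whence $\bm\delta^\top\mc H(\bm x_0)\bm\delta=\bm\delta^\top\nabla^2_{\bm x\bm x}\mc L(\bm x_0,\bm\lambda^\star(\bm x_0))\bm\delta=\bm\delta^\top\nabla^2 f(\bm x_0)\bm\delta-\sum_i\lambda_i^\star(\bm x_0)\,\bm\delta^\top\nabla^2 c_i(\bm x_0)\bm\delta$. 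Moreover, the minimizer in \eqref{eqn:lambda_def} is the least-squares solution $\bm\lambda^\star(\bm x_0)=[\nabla\bm c(\bm x_0)^\top\nabla\bm c(\bm x_0)]^{-1}\nabla\bm c(\bm x_0)^\top\nabla f(\bm x_0)$, which under Assumption~2 is well defined with $\norm{\bm\lambda^\star(\bm x_0)}{}\le\gamma_{f,1}\sqrt{\Gamma_1}/\sigma_0^2$ and $\norm{\mc G(\bm x_0)}{}\le\gamma_{f,1}$. It then suffices to argue for $\norm{\bm\delta}{}$ below a threshold of order $R$ (within which the projection analysis below is valid): for larger $\norm{\bm\delta}{}$ the left-hand sides of \eqref{1st_order_taylor}, \eqref{key_taylor} are $O(\norm{\bm\delta}{}+\norm{\bm\delta}{}^2)$ by the crude bounds $\norm{\bm x_1-\bm x_0}{}\le 2\norm{\bm\delta}{}$ (as $\bm x_0\in\Omega$), the boundedness/Lipschitz hypotheses on $f,\nabla f$, and the two norm bounds just stated, and the $R^{-1},R^{-2}$ summands in $C_0,\dots,C_4$ are exactly there to cover these.

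The crux is the projection estimate for small $\bm\delta$. Because $c_i(\bm x_0)=0$ and $\nabla c_i(\bm x_0)^\top\bm\delta=0$, Taylor's theorem (Assumption~2(a)) gives $\abs{c_i(\bm y)}\le\tfrac12 L_{c_i,1}\norm{\bm\delta}{}^2$, and the sharper $c_i(\bm y)=\tfrac12\bm\delta^\top\nabla^2 c_i(\bm x_0)\bm\delta+O(L_{c_i,2}\norm{\bm\delta}{}^3)$; hence $\norm{\bm c(\bm y)}{}\le\tfrac12\sqrt{\Lambda_1}\norm{\bm\delta}{}^2=\tfrac{\sigma_0}{2R}\norm{\bm\delta}{}^2$. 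On the ball $B(\bm x_0,R)$ the Jacobian $\nabla\bm c$ has smallest singular value $\ge\sigma_0$ and is $\sqrt{\Lambda_1}$-Lipschitz, so a contraction-mapping argument (a quantitative implicit function theorem: solve $\bm c(\bm y+\nabla\bm c(\bm x_0)\bm\mu)=\bm 0$ for $\bm\mu$) produces a feasible point within $O(\norm{\bm c(\bm y)}{}/\sigma_0)=O(\norm{\bm\delta}{}^2/R)$ of $\bm y$, and since $\bm x_1$ is the \emph{nearest} point of $\Omega$ to $\bm y$ this forces $\norm{\bm x_1-\bm y}{}=O(\norm{\bm\delta}{}^2/R)$, in particular $\bm x_1\in B(\bm x_0,R)$. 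For the refinement needed by the cubic bound I would then invoke first-order optimality of the projection, $\bm x_1-\bm y\perp\mc T_\Omega(\bm x_1)$, i.e.\ $\bm x_1-\bm y\in\mf R(\nabla\bm c(\bm x_1))$: feeding this into $\bm c(\bm x_1)=\bm 0$, linearizing, and absorbing the errors with $\norm{\bm x_1-\bm y}{}=O(\norm{\bm\delta}{}^2)$, $\norm{\bm x_1-\bm x_0}{}=O(\norm{\bm\delta}{})$, and the Lipschitz continuity on $B(\bm x_0,R)$ of $\bm x\mapsto\nabla\bm c(\bm x)$ and of $\bm x\mapsto\mc P_{\bm x}$ (the latter with a constant of order $\sqrt{\Gamma_1\Lambda_1}/\sigma_0^2+\sqrt{\Gamma_1^3\Lambda_1}/\sigma_0^4$, responsible for the $C_4$ term), one arrives at
\[
\bm x_1-\bm y=-\nabla\bm c(\bm x_0)\,[\nabla\bm c(\bm x_0)^\top\nabla\bm c(\bm x_0)]^{-1}\bm c(\bm y)+\bm e,\qquad\norm{\bm e}{}=O(\norm{\bm\delta}{}^3).
\]
This is the heaviest step: it needs both the coarse $O(\norm{\bm\delta}{}^2)$ displacement bound and the $O(\norm{\bm\delta}{}^3)$ identification of its leading term.

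With this in hand the two inequalities follow by bookkeeping. For \eqref{1st_order_taylor} I would split $f(\bm x_1)-f(\bm x_0)-\mc G(\bm x_0)^\top\bm\delta=[f(\bm x_1)-f(\bm y)]+[f(\bm y)-f(\bm x_0)-\nabla f(\bm x_0)^\top\bm\delta]$ (using $\mc G(\bm x_0)^\top\bm\delta=\nabla f(\bm x_0)^\top\bm\delta$), bound the first bracket by $\gamma_{f,1}\norm{\bm x_1-\bm y}{}=O(\gamma_{f,1}\norm{\bm\delta}{}^2/R)$ and the second by $\tfrac12 L_{f,1}\norm{\bm\delta}{}^2$, and collect constants into $C_0$. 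For \eqref{key_taylor} I would write $f(\bm y)=f(\bm x_0)+\nabla f(\bm x_0)^\top\bm\delta+\tfrac12\bm\delta^\top\nabla^2 f(\bm x_0)\bm\delta+O(L_{f,2}\norm{\bm\delta}{}^3)$ and, by a first-order expansion of $f$ at $\bm y$ (replacing $\nabla f(\bm y)$ by $\nabla f(\bm x_0)$ at cost $O(L_{f,1}\norm{\bm\delta}{}^3)$ and discarding the $O(\gamma_{f,2}\norm{\bm\delta}{}^4)$ curvature term), $f(\bm x_1)=f(\bm y)+\nabla f(\bm x_0)^\top(\bm x_1-\bm y)+O(\norm{\bm\delta}{}^3)$. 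Substituting the refinement above, using $\nabla f(\bm x_0)^\top\nabla\bm c(\bm x_0)[\nabla\bm c(\bm x_0)^\top\nabla\bm c(\bm x_0)]^{-1}=\bm\lambda^\star(\bm x_0)^\top$ together with $c_i(\bm y)=\tfrac12\bm\delta^\top\nabla^2 c_i(\bm x_0)\bm\delta+O(L_{c_i,2}\norm{\bm\delta}{}^3)$ and $\norm{\bm\lambda^\star(\bm x_0)}{}\le\gamma_{f,1}\sqrt{\Gamma_1}/\sigma_0^2$, gives $\nabla f(\bm x_0)^\top(\bm x_1-\bm y)=-\tfrac12\sum_i\lambda_i^\star(\bm x_0)\,\bm\delta^\top\nabla^2 c_i(\bm x_0)\bm\delta+O(\norm{\bm\delta}{}^3)$. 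Adding the two expansions, the quadratic-in-$\bm\delta$ terms collapse, by the identity of the second paragraph, into $\tfrac12\bm\delta^\top\nabla^2_{\bm x\bm x}\mc L(\bm x_0,\bm\lambda^\star(\bm x_0))\bm\delta=\tfrac12\bm\delta^\top\mc H(\bm x_0)\bm\delta$, while the linear term is $\mc G(\bm x_0)^\top\bm\delta$ — exactly \eqref{key_taylor}; keeping track of the constants hiding in each $O(\cdot)$ (governed by $L_{f,2}$, $\gamma_{f,1}\sqrt{\Gamma_1\Lambda_2}/\sigma_0^2$, $(\gamma_{f,2}+\gamma_{f,1}\sqrt{\Gamma_1\Gamma_2}/\sigma_0^2)/R$, and the $C_4$ term) reproduces $C_1,\dots,C_5$.
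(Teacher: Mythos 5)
Your proposal is essentially correct, but it reaches the cubic estimate \eqref{key_taylor} by a genuinely different decomposition from the paper's. The paper never Taylor-expands $f$ at the infeasible point $\bm x_0+\bm\delta$: it expands the Lagrangian $\mc L(\cdot,\bm\lambda^\star(\bm x_0))$ between the two \emph{feasible} points $\bm x_0$ and $\bm y_0=\Pi_\Omega(\bm x_0+\bm\delta)$ (where $\mc L$ coincides with $f$), writes the increment as $\bm y_0-\bm x_0=\bm\delta-\bm\xi$ with $\bm\xi=(\bm x_0+\bm\delta)-\bm y_0$, and then must show the cross term $\mc G(\bm x_0)^\top\bm\xi$ is $O(\norm{\bm\delta}{}^3)$ even though $\norm{\bm\xi}{}$ is only $O(\norm{\bm\delta}{}^2)$; it does this via $\mc G(\bm x_0)^\top\bm\xi=\mc G(\bm x_0)^\top(\mc P_{\bm x_0}-\mc P_{\bm y_0})\bm\xi$ (tangency of $\mc G$ at $\bm x_0$, normality of $\bm\xi$ at $\bm y_0$) together with Lipschitz continuity of $\bm x\mapsto\mc P_{\bm x}$. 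You instead expand $f$ alone out to $\bm x_0+\bm\delta$ and recover the constraint-curvature part of $\mc H$ from the explicitly identified leading term of the projection displacement, $\Pi_\Omega(\bm x_0+\bm\delta)-(\bm x_0+\bm\delta)=-\nabla\bm{c}(\bm x_0)\{\nabla\bm{c}(\bm x_0)^\top\nabla\bm{c}(\bm x_0)\}^{-1}\bm c(\bm x_0+\bm\delta)+O(\norm{\bm\delta}{}^3)$, paired with $c_i(\bm x_0+\bm\delta)=\tfrac12\bm\delta^\top\nabla^2c_i(\bm x_0)\bm\delta+O(\norm{\bm\delta}{}^3)$ and the least-squares formula \eqref{lambda_express}; that refinement is indeed provable from the ingredients you name (normality of the residual at the projected point, the coarse $O(\norm{\bm\delta}{}^2)$ displacement bound, Lipschitz continuity of $\nabla\bm c$ and $\sigma_0$-LICQ). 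What your route buys: it explains \emph{why} the Lagrangian Hessian rather than $\nabla^2 f$ appears, and it is self-contained, since you re-derive via a quantitative implicit-function argument the displacement bound that the paper simply imports as Proposition \ref{the_prop} from \cite{GHJY15}. What it costs: your constants will not literally coincide with the stated $C_0,\dots,C_5$ (e.g.\ you use the sharp bound $\norm{\mc G(\bm x_0)}{}\le\gamma_{f,1}$, valid because $\mc G=\mc P_{\bm x}\nabla f$, where the paper uses the looser $\gamma_{f,1}(1+\Gamma_1/\sigma_0^2)$), so you would need either to restate the lemma with your own constants or to check domination term by term; and the large-$\norm{\bm\delta}{}$ fallback, while sound, should be written out rather than asserted. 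Finally, be aware that both your argument and the paper's apply the Lipschitz hypotheses along segments that leave $\Omega$, so Assumption \ref{ass: key_ass} must in either case be read as holding on a neighborhood of $\Omega$.
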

\begin{proof}
See the appendix.
\end{proof}

\begin{remark}
If $\Omega$ is a differentiable manifold, our $\mc{G}(\cdot)$ and $\mc{H}(\cdot)$ defined in \eqref{eqn:G} and \eqref{eqn:H} can be proven to be the Riemannian gradient and Riemannian Hessian of $f$ over $\Omega$ (see subsection \ref{sub:relations} for details). The expressions \eqref{eqn:G} and \eqref{eqn:H} concretize these abstract geometrical concepts by providing an explicit algebraic way to compute them.
\end{remark}

\subsection{Negative curvature method}
In this subsection, we will present a general framework for using negative curvature of the generalized Hessian $\mc{H}(\bm x)$ to solve (\ref{eqn:problem_to_solve}) and show that the sequence generated by this framework converges to a second-order critical point.

  It is clear that using the simple projected gradient method
\begin{align}
\bm x_{k+1} \gets \Pi_\Omega\left({\bm x}_k - t \mc{G}(\bm x_k)\right)  \label{project_gradient_step}
\end{align}
might be trapped by the saddle points of \eqref{eqn:problem_to_solve}.
To overcome this difficulty, it is natural to consider using second-order information, that is, information about the Hessian of $f$ and the $c_i$'s. In particular, we can use a negative curvature direction of $\mc{H}(\bm x)$ defined in subsection \ref{sub:generalized}. We say $\mb{d}$ is a negative curvature direction of $\mc{H}(\bm x)$ if it has the property that $\mb{d}^\top\mc{H}(\bm x)\mb{d} < 0$. Based on the second-order approximation result revealed in Lemma \ref{key_lemma}, it is intuitive that moving in a negative curvature direction will enable an algorithm to escape from a saddle point. Therefore, it is tempting to move along a direction that combines the negative gradient direction with a descent direction of negative curvature:
\begin{align}
\bm x_{k+1} \gets \Pi_\Omega\left({\bm x}_k - t_1 \mc{G}(\bm x_k) - t_2 \bm d \right) \label{project_curvilinear_step}
\end{align}
especially for the regions close to saddle points.


  Moreover, negative-curvature directions can be obtained at a relatively small cost. For example,  when a Hessian matrix $H(\bm x)$ is indefinite, the eigenvector corresponding to its algebraically smallest eigenvalue is a negative curvature direction. It can be obtained by executing the power iteration method on $H(\bm x)$ to obtain a eigenvalue, eigenvector pair $(\lambda_{\text{dom}}, \bm{d}_{\text{dom}})$ corresponding to the eigenvalue of largest magnitude, the so-called {\it dominant} eigenvalue $\lambda_{\text{dom}}$. If $\lambda_{\text{dom}} < 0$, $\bm d_{\text{dom}}$ is the direction of most negative curvature. Otherwise, we can perform the method again on $\mc{H} - \lambda_{\text{dom}}\mc{I}$. More efficient and robust methods using variants of Lanzos algorithm to compute the algebraically smallest eigenpair can be found in \cite{larsen2004propack, stathopoulos2010primme}.

  In Algorithm \ref{curvlinear_search}, we provide a general framework for using negative curvature directions to solve problem \eqref{eqn:problem_to_solve}. This algorithm integrates the first-order and second-order methods. Specifically, when the iterate $\bm x_k$ is far from any saddle point, we only use the first-order information to make progress:
\[
\hat{{\bm x}} \leftarrow \Pi_\Omega({\bm x}_k - t\mcb{G}_k).
\]
When $\bm x_k$ is near saddle points, we combine the negative gradient and the negative curvature direction:
\begin{flalign}
\hat{{\bm x}} \leftarrow \Pi_\Omega({\bm x}_k - t\mcb{G}_k + t^\alpha \mb{d}_k).
\label{eqn:cur_search}
\end{flalign}
If $\alpha$ is chosen to be $2$ and $\Omega = \reals^n$ , \eqref{eqn:cur_search} is reduced to the one leveraged in \cite{GOLD80}; if $\alpha$ is set to $1/2$ and $\Omega = \reals^n$, \eqref{eqn:cur_search} is equivalent to the one used in \cite{MS79}.

In the rest of this section, we will confirm our intuition that Algorithm \ref{curvlinear_search} is capable of escaping saddle points by proving the following theorem:

\begin{center}
\begin{algorithm}[h]
\caption{Negative Curvature Method for Problem \eqref{eqn:problem_to_solve}}
\label{curvlinear_search}
\begin{algorithmic}[1]
\renewcommand\algorithmicrequire{\textbf{input}}
\Require parameters $0 < \sigma < 1$, $0< \rho < 1$, $\alpha > 0$, $\epsilon > 0$ and $t_0 > 0$.
\State initialize ${\bm x}_0 \in \Omega$;
\For{$k = 0, 1, \ldots, $}
\State $\mcb{G}_k \leftarrow \mc{G}(\mb{x_k})$
\If{$\|\mcb{G}_k\| \geq \epsilon$}
\State $t \leftarrow t_0$
\State $\hat{{\bm x}} \leftarrow \Pi_\Omega({\bm x}_k - t\mcb{G}_k)$
\While{$f(\hat{{\bm x}}) - f({\bm x}_k) > -\sigma t \|\mcb{G}_k\|^2$}
\State $t \leftarrow \rho t$
\State $\hat{{\bm x}} \leftarrow \Pi_\Omega({\bm x}_k - t\mcb{G}_k)$
\EndWhile
\Else
\State $\mcb{H}_k \leftarrow \mc{H}({\bm x}_k)$
\State $(\lambda_k^\text{min}, \mb{v}_k) \leftarrow$ the algebraically smallest eigenpair of $\mcb{H}_k$
\State $\lambda_k \leftarrow \min\{\lambda_k^\text{min}, 0\}$
\State $\mb{d}_k \leftarrow \vert \lambda_k \vert \text{sign}(-\mb{v}_k^\top\mcb{G}_k)\mb{v}_k$
\State $t \leftarrow t_0$
\State
 $\hat{{\bm x}} \leftarrow \Pi_\Omega({\bm x}_k - t\mcb{G}_k + t^\alpha \mb{d}_k)$
\While{$f(\hat{{\bm x}}) - f({\bm x}_k) > \sigma \big( -t\|\mcb{G}_k\|^2 - \frac{1}{2}t^{2\alpha}\vert\lambda_k\vert^3\big)$}
\State $t \leftarrow \rho t$
\State $\hat{{\bm x}} \leftarrow \Pi_\Omega({\bm x}_k - t\mcb{G}_k + t^\alpha \mb{d}_k)$
\EndWhile
\EndIf
\State $t_k \leftarrow t$
\State ${\bm x}_{k+1} \leftarrow \hat{{\bm x}}$
\EndFor
\end{algorithmic}
\end{algorithm}
\end{center}

\begin{theorem}\label{thm:alg}
For the sequences $\{\mcb{G}_k\}$ and $\{\lambda_k\}$ generated by Algorithm \ref{curvlinear_search}, one has $\mcb{G}_k \rightarrow \mb{0}$ and $\lambda_k \rightarrow 0$ as $k \rightarrow \infty$.
\end{theorem}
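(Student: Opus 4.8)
The plan is to run the standard line-search convergence argument, but adapted to the curvilinear step and using Lemma~\ref{key_lemma} as the surrogate for the classical Taylor expansion. First I would establish that each inner \texttt{while} loop terminates after finitely many backtracking steps, so $t_k$ is well-defined and bounded below. For the gradient branch (when $\|\mcb{G}_k\| \ge \epsilon$), note that setting $\bm\delta = -t\mcb{G}_k \in \mc{T}_\Omega(\bm x_k)$ in \eqref{1st_order_taylor} gives $f(\hat{\bm x}) - f(\bm x_k) \le -t\|\mcb{G}_k\|^2 + C_0 t^2 \|\mcb{G}_k\|^2$, so for $t \le (1-\sigma)/C_0$ the Armijo test $f(\hat{\bm x}) - f(\bm x_k) \le -\sigma t\|\mcb{G}_k\|^2$ is satisfied; hence $t_k \ge \rho(1-\sigma)/C_0$. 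For the curvature branch I would take $\bm\delta = -t\mcb{G}_k + t^\alpha\bm d_k$, which is still in $\mc{T}_\Omega(\bm x_k)$ since $\mcb{G}_k \in \mc{T}_\Omega(\bm x_k)$ and $\bm d_k = |\lambda_k|\,\mathrm{sign}(-\bm v_k^\top\mcb{G}_k)\bm v_k$ lies in the range of $\mc P_{\bm x_k}$; plugging into \eqref{key_taylor} and using $\bm v_k^\top\mc{H}(\bm x_k)\bm v_k = \lambda_k$, $\|\bm v_k\| = 1$, together with the sign choice that makes the cross term $-t^{1+\alpha}\mcb{G}_k^\top\bm d_k \le 0$, yields $f(\hat{\bm x}) - f(\bm x_k) \le -t\|\mcb{G}_k\|^2 + \tfrac12 t^{2\alpha}\lambda_k|\lambda_k|^2 + C_5\|\bm\delta\|^3 = -t\|\mcb{G}_k\|^2 - \tfrac12 t^{2\alpha}|\lambda_k|^3 + O(\|\bm\delta\|^3)$. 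Bounding $\|\bm\delta\|^3$ by a constant times $t^{3\alpha}|\lambda_k|^3 + t^3\|\mcb{G}_k\|^3$ and using $\|\mcb{G}_k\| < \epsilon$, $|\lambda_k|$ bounded (by Assumption~2(b)), one checks the cubic remainder is dominated by the quadratic decrease terms for $t$ below a fixed threshold, giving a uniform lower bound $t_k \ge \tau > 0$ in this branch as well.

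Given the step-size lower bounds, the second step is a telescoping/monotonicity argument. Since $f$ is bounded below on $\Omega$ (it is continuous and, by Assumption~2(b), has bounded gradient; more carefully one may assume $\Omega$ compact or $f$ coercive, or simply that $\inf_\Omega f > -\infty$, which should be added as a hypothesis), and $\{f(\bm x_k)\}$ is nonincreasing, the sequence $f(\bm x_k)$ converges and therefore $f(\bm x_k) - f(\bm x_{k+1}) \to 0$. In the gradient branch the acceptance criterion gives $f(\bm x_k) - f(\bm x_{k+1}) \ge \sigma t_k \|\mcb{G}_k\|^2 \ge \sigma\tau\|\mcb{G}_k\|^2$, so over the infinitely many iterations that take the gradient branch we get $\|\mcb{G}_k\| \to 0$ along that subsequence; but the gradient branch is entered precisely when $\|\mcb{G}_k\| \ge \epsilon$, and if it were entered infinitely often we would contradict $\|\mcb{G}_k\| \to 0$ there — hence the curvature branch is taken for all $k$ large, which forces $\|\mcb{G}_k\| < \epsilon$ eventually. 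Re-examining: actually the cleanest route is to show both that only finitely many gradient steps occur (else $f$ would decrease by at least $\sigma\tau\epsilon^2$ infinitely often, contradicting boundedness below), and then that along the eventual all-curvature tail, $f(\bm x_k)-f(\bm x_{k+1}) \ge \sigma(t_k\|\mcb{G}_k\|^2 + \tfrac12 t_k^{2\alpha}|\lambda_k|^3) \ge \sigma\tau^{2\alpha}\tfrac12|\lambda_k|^3$ (and similarly $\ge \sigma\tau\|\mcb{G}_k\|^2$), whose summability forces $\lambda_k \to 0$ and $\mcb{G}_k \to 0$.

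One subtlety I need to handle carefully: once it is established that only finitely many gradient steps are taken, all large-$k$ iterations are curvature steps, and there $\|\mcb{G}_k\| < \epsilon$ only guarantees $\mcb{G}_k$ is small, not that it vanishes — so I cannot conclude $\mcb{G}_k \to 0$ from branch selection alone. Instead I must use the $-\sigma t_k\|\mcb{G}_k\|^2$ term inside the curvature-branch acceptance test: since this term is also present, $f(\bm x_k) - f(\bm x_{k+1}) \ge \sigma t_k \|\mcb{G}_k\|^2 \ge \sigma\tau\|\mcb{G}_k\|^2$, and summability of the left side over all $k$ gives $\mcb{G}_k \to 0$. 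This is exactly why the acceptance criterion retains both terms.

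\textbf{Main obstacle.} I expect the delicate point to be proving the uniform lower bound $t_k \ge \tau$ for the curvature branch: the acceptance test mixes a term of order $t$ (from the gradient) with a term of order $t^{2\alpha}$ (from the curvature), while the Lemma~\ref{key_lemma} remainder is of order $\|\bm\delta\|^3 \asymp (t\|\mcb{G}_k\| + t^\alpha|\lambda_k|)^3$, and one must verify that for \emph{every} admissible value of $\alpha > 0$ the cubic remainder is eventually dominated — this requires a small case analysis on how $2\alpha$, $3\alpha$, $1$, $3$ compare (e.g. $\alpha \ge 1$ versus $\alpha < 1$, and whether $2\alpha \le 3\alpha$, etc.), and careful bookkeeping of which term is the bottleneck. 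A secondary (and routine, but necessary) point is confirming $\bm\delta \in \mc{T}_\Omega(\bm x_k)$ so that Lemma~\ref{key_lemma} applies: this needs $\mcb{G}_k \in \mc{T}_\Omega(\bm x_k)$, which follows from the definition \eqref{eqn:G}–\eqref{eqn:lambda_def} of the generalized gradient as a residual of a least-squares projection onto the normal space, and $\bm d_k$ being a multiple of an eigenvector of $\mc{H}(\bm x_k) = \mc P_{\bm x_k}^\top(\cdots)\mc P_{\bm x_k}$, which is supported on $\mc{T}_\Omega(\bm x_k)$. Finally, I would flag that the theorem as stated implicitly needs $\inf_\Omega f > -\infty$ (or compactness of $\Omega$), which should be listed among the assumptions invoked.
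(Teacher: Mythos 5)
Your proposal is correct and follows essentially the same route as the paper: a uniform lower bound on the accepted step sizes in both branches obtained by plugging the step into the Taylor-type bounds of Lemma \ref{key_lemma} (the paper's Lemmas \ref{expan_lemma}--\ref{lem:t_k}), followed by a telescoping argument using boundedness of $f$ from below and the fact that the curvature-branch acceptance test retains both the $-\sigma t\|\mcb{G}_k\|^2$ and the $-\tfrac{\sigma}{2}t^{2\alpha}|\lambda_k|^3$ terms. Your two flagged concerns are also consistent with the paper: it likewise invokes $\lim_{k\to\infty}f(\bm x_k)>-\infty$ without listing it among the assumptions, and it avoids your anticipated case analysis on $\alpha$ by bounding the gradient-order terms and the curvature-order terms separately (its choice of $\underline{t}$ together with the restriction $t\le 1$), so no comparison of the exponents $1,2\alpha,3,3\alpha$ is needed.
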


\begin{remark}
Based on Theorem \ref{thm:alg}, any cluster point of $\set{\bm x_k}$ generated by Algorithm \ref{curvlinear_search} is a second-order critical point of problem \eqref{eqn:problem_to_solve}.
\end{remark}

  To prove Theorem \ref{thm:alg}, we need several lemmas.

\begin{lemma} \label{expan_lemma}
Consider the $k$-th iteration.
\begin{description}
\item[(a)]For the parametrized curve ${\bm x}(t) = \Pi_\Omega({\bm x}_k - t\mcb{G}_k)$, we have
\begin{align}
f({\bm x}(t)) - f({\bm x}_k) \leq -\|\mcb{G}_k\|^2t + C_0\|\mcb{G}_k\|^2t^2;
\label{eqn:first_ineq}
\end{align}
\item[(b)]For the parametrized curve ${\bm x}(t) = \Pi_\Omega({\bm x}_k - t\mcb{G}_k + t^\alpha\mb{d}_k)$, we have
\begin{align}
f({\bm x}(t)) - f({\bm x}_k) \leq -\|\mcb{G}_k\|^2t + \frac{1}{2}\|\mcb{H}_k\|\|\mcb{G}_k\|^2t^2 - \frac{1}{2}\vert \lambda_k\vert^3 t^{2\alpha} + 8C_5\|\mcb{G}_k\|^3t^3+ 8C_5\vert\lambda_k\vert^3t^{3\alpha}.
\label{eqn:second_ineq}
\end{align}
\end{description}

\end{lemma}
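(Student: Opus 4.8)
The plan is to obtain both inequalities by substituting the curvilinear search displacements used in Algorithm~\ref{curvlinear_search} into the Taylor-type estimates of Lemma~\ref{key_lemma}; the only preliminary work is to verify that these displacements lie in the tangent subspace $\mc T_\Omega(\bm x_k)$ (so Lemma~\ref{key_lemma} applies), and then to track the signs of the cross terms that appear.

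First I would record two structural facts about the quantities generated at iteration $k$. Since $\bm\lambda^\star(\bm x_k)$ minimizes $\|\nabla_{\bm x}\mc L(\bm x_k,\cdot)\|$ and $\nabla_{\bm x}\mc L(\bm x_k,\bm\lambda)=\nabla f(\bm x_k)-\nabla\mb c(\bm x_k)\bm\lambda$, the residual $\mcb{G}_k=\mc G(\bm x_k)$ is orthogonal to $\mf R(\nabla\mb c(\bm x_k))=\mc N_\Omega(\bm x_k)$, so $\mcb{G}_k\in\mc T_\Omega(\bm x_k)$. Moreover $\mcb{H}_k=\mc P_{\bm x_k}^\top\nabla_{\bm x\bm x}^2\mc L\,\mc P_{\bm x_k}$ has range contained in $\mc T_\Omega(\bm x_k)$ because $\mc P_{\bm x_k}$ is the orthogonal projector onto $\mc T_\Omega(\bm x_k)$; hence if $\lambda_k^{\text{min}}<0$ its unit eigenvector $\mb v_k$ lies in $\mc T_\Omega(\bm x_k)$ and so does $\mb d_k=|\lambda_k|\,\text{sign}(-\mb v_k^\top\mcb{G}_k)\mb v_k$, while if $\lambda_k^{\text{min}}\ge 0$ then $\lambda_k=0$ and $\mb d_k=\bm 0$. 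In every case $\mb d_k\in\mc T_\Omega(\bm x_k)$, and since $\mc T_\Omega(\bm x_k)$ is a linear subspace, both $-t\mcb{G}_k$ and $-t\mcb{G}_k+t^\alpha\mb d_k$ belong to it, so Lemma~\ref{key_lemma} applies with $\bm x_0=\bm x_k$.

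Part (a) is then immediate: taking $\bm\delta=-t\mcb{G}_k$ in \eqref{1st_order_taylor} and using $\mc G(\bm x_k)^\top(-t\mcb{G}_k)=-t\|\mcb{G}_k\|^2$ and $\|\bm\delta\|^2=t^2\|\mcb{G}_k\|^2$, inequality \eqref{1st_order_taylor} rearranges to \eqref{eqn:first_ineq}. For part (b) I would take $\bm\delta=-t\mcb{G}_k+t^\alpha\mb d_k$ in \eqref{key_taylor} and bound the three terms on the right, discarding whatever is provably non-positive. Writing $\mb d_k$ as a $\pm|\lambda_k|$-multiple of the unit eigenvector $\mb v_k$: (i) $\mc G(\bm x_k)^\top\bm\delta=-t\|\mcb{G}_k\|^2+t^\alpha\mcb{G}_k^\top\mb d_k=-t\|\mcb{G}_k\|^2-t^\alpha|\lambda_k|\,|\mb v_k^\top\mcb{G}_k|\le -t\|\mcb{G}_k\|^2$, the sign being forced by the $\text{sign}(-\mb v_k^\top\mcb{G}_k)$ factor; (ii) expanding $\tfrac12\bm\delta^\top\mcb{H}_k\bm\delta=\tfrac12 t^2\mcb{G}_k^\top\mcb{H}_k\mcb{G}_k-t^{1+\alpha}\mcb{G}_k^\top\mcb{H}_k\mb d_k+\tfrac12 t^{2\alpha}\mb d_k^\top\mcb{H}_k\mb d_k$, the first term is $\le\tfrac12\|\mcb{H}_k\|\,\|\mcb{G}_k\|^2t^2$, while $\mcb{H}_k\mb d_k=\lambda_k^{\text{min}}\mb d_k$ makes $-t^{1+\alpha}\mcb{G}_k^\top\mcb{H}_k\mb d_k=-t^{1+\alpha}\lambda_k^{\text{min}}(\mcb{G}_k^\top\mb d_k)\le 0$ (a product of two non-positive scalars when $\lambda_k^{\text{min}}<0$, and $0$ when $\mb d_k=\bm 0$) and $\mb d_k^\top\mcb{H}_k\mb d_k=\lambda_k^{\text{min}}|\lambda_k|^2=-|\lambda_k|^3$ in both cases; (iii) $\|\bm\delta\|\le t\|\mcb{G}_k\|+t^\alpha|\lambda_k|$ together with $(a+b)^3\le 8(a^3+b^3)$ for $a,b\ge 0$ gives $C_5\|\bm\delta\|^3\le 8C_5\|\mcb{G}_k\|^3t^3+8C_5|\lambda_k|^3t^{3\alpha}$. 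Summing (i)--(iii) yields \eqref{eqn:second_ineq}.

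The one place that needs genuine care is item (ii): one must use that $\mb d_k$ is a scalar multiple of the eigenvector $\mb v_k$ of $\mcb{H}_k$ (so that $\mcb{H}_k\mb d_k=\lambda_k^{\text{min}}\mb d_k$), both to collapse the cross term $-t^{1+\alpha}\mcb{G}_k^\top\mcb{H}_k\mb d_k$ to a non-positive quantity and to evaluate $\mb d_k^\top\mcb{H}_k\mb d_k$ exactly as $-|\lambda_k|^3$, and one must respect the sign convention built into $\mb d_k$ so that both $\mc G(\bm x_k)^\top\mb d_k$ and $\mcb{G}_k^\top\mcb{H}_k\mb d_k$ come out with the favorable sign. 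The degenerate case $\lambda_k^{\text{min}}\ge 0$, where $\mb d_k=\bm 0$ while $\lambda_k=0$, should be handled separately so that the identity $\mb d_k^\top\mcb{H}_k\mb d_k=-|\lambda_k|^3$ and the vanishing of the cross term still hold. Everything else---the triangle inequality, $\|\mb d_k\|=|\lambda_k|$, and the elementary inequality $(a+b)^3\le 8(a^3+b^3)$---is routine bookkeeping.
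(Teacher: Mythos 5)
Your proposal is correct and follows essentially the same route as the paper: substitute $\bm\delta=-t\mcb{G}_k$ (resp. $-t\mcb{G}_k+t^\alpha\mb d_k$) into the estimates \eqref{1st_order_taylor} and \eqref{key_taylor} of Lemma~\ref{key_lemma}, expand the quadratic form, and use the sign convention in $\mb d_k$ together with $\mcb H_k\mb v_k=\lambda_k^{\min}\mb v_k$ to kill the cross terms and evaluate $\mb d_k^\top\mcb H_k\mb d_k=-|\lambda_k|^3$, finishing with $(a+b)^3\le 8(a^3+b^3)$. Your explicit verification that $\mcb G_k,\mb d_k\in\mc T_\Omega(\bm x_k)$ and your separate treatment of the degenerate case $\lambda_k^{\min}\ge 0$ are details the paper leaves implicit, but they do not change the argument.
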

\begin{proof}
For part (a), we can directly apply (\ref{1st_order_taylor}) in Lemma \ref{key_lemma}:
\begin{flalign}
f({\bm x}(t)) - f({\bm x}_k) \leq \innerprod{\mcb{G}_k}{-t\mcb{G}_k}+ C_0\|\mcb{G}_k\|^2t^2 = -\|\mcb{G}_k\|^2t + C_0\|\mcb{G}_k\|^2t^2.
\nonumber
\end{flalign}

  Now let us focus on part (b).

  For part (b), based on \eqref{key_taylor} of Lemma \ref{key_lemma}, we first have
\begin{flalign}
&f({\bm x}(t)) - f({\bm x}_k) \nonumber\\
&\leq \langle \mcb{G}_k, -t\mcb{G}_k + t^\alpha \mb{d}_k\rangle + \frac{1}{2}(-t\mcb{G}_k + t^\alpha \mb{d}_k)^\top \mcb{H}_k (-t\mcb{G}_k + t^\alpha \mb{d}_k) + C_5\|-t\mcb{G}_k + t^\alpha \mb{d}_k\|^3 \nonumber\\
&\leq -\|\mcb{G}_k\|^2 t + \langle \mcb{G}_k, \mb{d}_k \rangle t^\alpha + \frac{1}{2}\mcb{G}_k^\top \mcb{H}_k\mcb{G}_kt^2 - \mcb{G}_k^\top \mcb{H}_k \mb{d}_kt^{\alpha + 1} + \frac{1}{2}\mb{d}_k^\top\mcb{H}_k\mb{d}_kt^{2\alpha} + C_5\|-t\mcb{G}_k + t^\alpha \mb{d}_k\|^3. \label{eqn:total}
\end{flalign}
Next, we will bound each term in \eqref{eqn:total}:
\begin{flalign}
&\langle \mcb{G}_k, \mb{d}_k \rangle t^\alpha \leq |\lambda_k| \mbox{sign}(-\bm v_k^\top \mcb{G}_k)\bm v_k^\top \mcb{G}_k t^\alpha \le 0 \label{eqn:sub_1}\\
&\frac{1}{2} \mcb{G}_k^\top \mcb{H}_k \mcb{G}_k t^2 \leq \frac{1}{2} \norm{\mcb{H}_k}{} \norm{\mcb{G}_k}{}^2 t^2 \label{eqn:sub_2}\\
&-\mcb{G}_k^\top \mcb{H}_k \bm d_k t^{\alpha+1} = - \mcb{G}_k^\top |\lambda_k| \mbox{sign}(-\bm v_k^\top \mcb{G}_k) \mcb{H}_k \bm v_k t^{\alpha+1} = \lambda_k |\lambda_k| \mbox{sign}(-\bm v_k^\top \mcb{G}_k) (- \mcb{G}_k^\top\bm v_k) t^{\alpha+1} \le 0 \label{eqn:sub_3}\\
&\frac{1}{2} \bm d_k^\top \mcb{H}_k \bm d_k t^{2\alpha} = \frac{1}{2} |\lambda_k|^2 \bm v_k^\top \mcb H_k \bm v_k  t^{2\alpha} = -\frac{1}{2} |\lambda_k|^3 t^{2 \alpha} \label{eqn:sub_4}   \\
&\|-t\mcb{G}_k + t^\alpha \mb{d}_k\|^3 \le \left( t\norm{\mcb{G}_k}{} + t^\alpha \norm{\bm d_k}{}   \right)^3 \le 8 \max(t \norm{\mcb G_k}{}, t^\alpha \norm{\bm d_k}{})^3 \le 8 t^3 \norm{\mcb G_k}{}^3 + 8 t^{3 \alpha} |\lambda_k|^3. \label{eqn:sub_5}
\end{flalign}
With \eqref{eqn:sub_1}-\eqref{eqn:sub_5} plugged in \eqref{eqn:total}, we reach \eqref{eqn:second_ineq}.

\begin{lemma}
There exists a constant $\gamma_h \ge 0$ such that $\sup_{\bm x \in \Omega} \norm{\mc H(\bm x)}{} \le \gamma_h.$
\end{lemma}

\begin{proof}
For any $\bm x \in \Omega$, one has
\begin{flalign}
\norm{\mc H(\bm x)}{} &\le \norm{\mc P_{\bm x}^\top \nabla_{\bm x \bm x}^2 \mc L(\bm x, \bm \lambda^\star (\bm x))   \mc P_{\bm x}}{} \nonumber \\
& \le \norm{ \nabla_{\bm x \bm x}^2 \mc L(\bm x, \bm \lambda^\star (\bm x))}{} \nonumber \\
& \le \norm{\nabla_{}^2 f(\bm x) - \sum_{i \in [m]} \lambda_i^\star (\bm x) \nabla^2 c_i(\bm x)}{} \nonumber \\
& \le \norm{\nabla^2 f(\bm x)}{} + \norm{\lambda^\star (\bm x)}{\infty} \sum_{i \in [m]} \norm{\nabla^2 c_i(\bm x)}{}. \label{H_bound}
\end{flalign}

Based on Assumption \ref{ass: key_ass}, we have
\begin{flalign}
\norm{\nabla^2 f(\bm x)}{} \le \gamma_{f,2} \;\; \mbox{and} \;\; \sum_{i \in [m]} \norm{\nabla^2 c_i(\bm x)}{} \le \sum_{i \in [m]} \gamma_{c_i, 2}.
\label{eqn:first_bd}
\end{flalign}

From the definition \eqref{eqn:lambda_def}. we can derive that
\begin{flalign}
\bm \lambda^\star({\bm x}) = ({\nabla \bm c}({\bm x})^\top{\nabla\bm c}({\bm x}))^{-1}{\nabla \bm c}({\bm x})^\top\nabla f({\bm x}). \label{lambda_express}
\end{flalign}
Together with Assumption \ref{ass: key_ass}, it can be obtained that
\begin{flalign}
\norm{\bm \lambda^\star({\bm x})}{\infty} & \le \norm{\bm \lambda^\star({\bm x})}{2} \nonumber\\
& = \norm{({\nabla \bm c}({\bm x})^\top{\nabla\bm c}({\bm x}))^{-1}{\nabla \bm c}({\bm x})^\top\nabla f({\bm x})}{2} \nonumber \\
& \le \norm{({\nabla \bm c}({\bm x})^\top{\nabla\bm c}({\bm x}))^{-1}}{} \norm{\nabla \bm c(\bm x)}{} \norm{\nabla f(\bm x)}{} \nonumber \\
& \le \norm{({\nabla \bm c}({\bm x})^\top{\nabla\bm c}({\bm x}))^{-1}}{} \norm{\nabla \bm c(\bm x)}{F} \norm{\nabla f(\bm x)}{} \nonumber \\
& \le \frac{1}{\sigma_0^2} \sqrt{\sum_{i \in [m]} \gamma^2_{c_i,1}} \cdot \gamma_{f,1}.
\label{eqn:lam_inf}
\end{flalign}
The lemma can be established by substituting \eqref{eqn:first_bd} and \eqref{eqn:lam_inf} into (\ref{H_bound}). Hence $\gamma_h = \sum_{i\in [m]}\gamma_{c_i, 2} + \frac{1}{\sigma_0^2} \sqrt{\sum_{i \in [m]} \gamma^2_{c_i,1}} \cdot \gamma_{f,1}\sum_{i \in [m]} \gamma_{c_i, 2}$
\end{proof}

%
%
%

\end{proof}
\begin{lemma}
$\{t_k\}$ is uniformly bounded from below, i.e., there exists $\beta > 0$ such that $t_k \geq \beta$ for every $k \in \mr{N}$. \label{lem:t_k}
\end{lemma}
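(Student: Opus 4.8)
The plan is to exhibit, once and for all, a constant $\bar t > 0$ depending only on the global quantities $C_0$, $C_5$, $\gamma_h$, $\sigma$, $\epsilon$, $\alpha$ (and in particular \emph{not} on the iteration index $k$) such that every trial step $t \le \bar t$ passes whichever sufficient-decrease test is active in iteration $k$. Granting this, the conclusion is immediate: the backtracking loop starts from $t_0$ and multiplies the trial by $\rho$ at each rejection, so the accepted $t_k$ either equals $t_0$, or equals $\rho$ times some rejected trial $t_k/\rho$; in the latter case $t_k/\rho > \bar t$ (because every $t \le \bar t$ is accepted), hence $t_k > \rho\bar t$. Either way $t_k \ge \min\{t_0,\rho\bar t\} =: \beta > 0$ for all $k$.

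To produce $\bar t$ I would handle the two branches of Algorithm~\ref{curvlinear_search} separately. In the gradient branch ($\|\mcb{G}_k\| \ge \epsilon$) the acceptance test is $f(\hat{\bm x}) - f(\bm x_k) \le -\sigma t\|\mcb{G}_k\|^2$, and Lemma~\ref{expan_lemma}(a) gives $f(\bm x(t)) - f(\bm x_k) \le \|\mcb{G}_k\|^2 t(C_0 t - 1)$; hence the test passes as soon as $C_0 t - 1 \le -\sigma$, i.e. for every $t \le (1-\sigma)/C_0$. In the curvature branch ($\|\mcb{G}_k\| < \epsilon$) the test is $f(\hat{\bm x}) - f(\bm x_k) \le \sigma\big(-t\|\mcb{G}_k\|^2 - \tfrac12 t^{2\alpha}|\lambda_k|^3\big)$; subtracting its right-hand side from the bound of Lemma~\ref{expan_lemma}(b) and regrouping, it suffices to verify the two inequalities
\[
\tfrac12\|\mcb{H}_k\|\,\|\mcb{G}_k\|^2 t^2 + 8C_5\|\mcb{G}_k\|^3 t^3 \le (1-\sigma)\|\mcb{G}_k\|^2 t
\qquad\text{and}\qquad
8C_5|\lambda_k|^3 t^{3\alpha} \le \tfrac{1-\sigma}{2}|\lambda_k|^3 t^{2\alpha}.
\]
Invoking the uniform curvature bound $\|\mcb{H}_k\| \le \gamma_h$ established above and the branch hypothesis $\|\mcb{G}_k\| < \epsilon$, the first inequality holds whenever $\tfrac12\gamma_h t + 8C_5\epsilon t^2 \le 1-\sigma$ and the second whenever $8C_5 t^\alpha \le (1-\sigma)/2$; both are satisfied for all $t$ below an explicit positive threshold, e.g. $t \le \min\big\{(1-\sigma)/\gamma_h,\ \sqrt{(1-\sigma)/(16C_5\epsilon)},\ \big((1-\sigma)/(16C_5)\big)^{1/\alpha}\big\}$. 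Setting $\bar t$ to the minimum of the gradient-branch and curvature-branch thresholds completes the construction.

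The two points that deserve care in the writeup are: first, that $\bar t$ is genuinely $k$-independent --- this is exactly why the uniform bound $\gamma_h$ on $\sup_{\bm x\in\Omega}\|\mc{H}(\bm x)\|$ and the branch condition $\|\mcb{G}_k\| < \epsilon$ are used, namely to purge the iteration-dependent magnitudes $\|\mcb{H}_k\|$ and $\|\mcb{G}_k\|$ from the estimates; and second, the degenerate cases $\mcb{G}_k = \bm 0$ or $\lambda_k = 0$, where one of the two displayed inequalities collapses to $0 \le 0$ and the corresponding cancellation by $\|\mcb{G}_k\|^2 t$ or $|\lambda_k|^3 t^{2\alpha}$ is not licit --- these are dispatched by inspection (if both vanish the trial step does not move $\bm x_k$ at all and is accepted trivially). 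I do not anticipate a real obstacle: this is the standard Armijo/backtracking step-size lower bound, transplanted to the curvilinear step and the generalized gradient/Hessian, with Lemma~\ref{expan_lemma} supplying precisely the quadratic-plus-cubic descent model required.
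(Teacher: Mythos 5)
Your proposal is correct and follows essentially the same route as the paper: split on the two branches of the algorithm, invoke Lemma~\ref{expan_lemma} for the descent model, use the uniform bound $\gamma_h$ on $\|\mcb{H}_k\|$ and the branch condition $\|\mcb{G}_k\|<\epsilon$ to extract a $k$-independent threshold, and conclude via the standard backtracking argument $t_k\ge\min\{t_0,\rho\bar t\}$. The only differences are cosmetic --- the paper packages its curvature-branch threshold as $\underline t=\min\{((1-\sigma)/(16C_5))^{1/\alpha},\,(2-2\sigma)/(\gamma_h+16C_5\epsilon),\,1\}$ by using $t\le 1$ to absorb the cubic term, where you use a square-root term instead --- and your explicit treatment of the degenerate cases $\mcb{G}_k=\bm 0$ or $\lambda_k=0$ is a small point the paper glosses over.
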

\begin{proof}
Consider the first case $\|\mcb{G}_k\| \ge \epsilon$. For ${\bm x}(t) = \Pi_\Omega({\bm x}_k - t\mcb{G}_k)$, as shown in Lemma \ref{expan_lemma}, one has
\begin{flalign}
f({\bm x}(t)) - f({\bm x}_k) \leq -\|\mcb{G}_k\|^2t + C_0 \|\mcb{G}_k\|^2t^2.
\end{flalign}
Therefore, whenever $t \le (1-\sigma)/C_0$, we have
\begin{flalign}
f({\bm x}(t)) - f({\bm x}_k) \leq -\|\mcb{G}_k\|^2t + C_0 \|\mcb{G}_k\|^2t^2 \le -\sigma t \norm{\mcb G_k}{}^2.
\end{flalign}
Thus,  $t_k \geq t_0 \rho^{\lceil \log_\rho\{(1 - \sigma) / (C_0 t_0)\}\rceil}$ for this case.

  Now let us consider the other case $\|\mcb{G}_k\| < \epsilon$. For ${\bm x}(t) = \Pi_\Omega({\bm x}_k - t\mcb{G}_k + t^\alpha\mb{d}_k)$, it follows from part (b) of Lemma \ref{expan_lemma} that
\begin{align}
f({\bm x}(t)) - f({\bm x}_k) \leq -\|\mcb{G}_k\|^2t + \frac{1}{2}\|\mcb{H}_k\|\|\mcb{G}_k\|^2t^2 - \frac{1}{2}\vert \lambda_k\vert^3 t^{2\alpha} + 8C_5\|\mcb{G}_k\|^3t^3+ 8C_5\vert\lambda_k\vert^3t^{3\alpha}.
\end{align}
When $t \le \underline{t} := \min \set{\left(\frac{1 - \sigma}{16C_5}\right)^{1/\alpha}, \;\frac{2 - 2\sigma}{\gamma_h + 16C_5\epsilon}, \;1}$, it can be verified that
\begin{align}
-\frac{1}{2}\vert\lambda_k\vert^3t^{2\alpha} + 8C_5\vert\lambda_k\vert^3t^{3\alpha}&\leq -\frac{1}{2}\sigma t^{2\alpha}\vert \lambda_k \vert^3 \quad
\text{and} \label{first_ieq} \\
-\|\mcb{G}_k\|^2t + \frac{1}{2}\|\mcb{H}_k\|\|\mcb{G}_k\|^2t^2 + 8C_5\|\mcb{G}_k\|^3t^3 &\leq -\sigma t\|\mcb{G}_k\|^2\label{second_ieq}
\end{align}
Combining \eqref{first_ieq} and \eqref{second_ieq}, we have
\begin{align*}
f({\bm x}(t)) - f({\bm x}_k) \leq \sigma \big(-t\|\mcb{G}_k\|^2 - \frac{1}{2}t^{2\alpha}\vert\lambda_k\vert^3\big).
\end{align*}
Therefore,
$
t_k \geq t_0 \rho^{\lceil \log_\rho \underline t/ t_0 \rceil}
$
for this case.

  Taking both cases into consideration, we have proved this lemma.

%
%
%
\end{proof}


  Now we are ready to prove Theorem \ref{thm:alg}.

\begin{proof}[Proof of Theorem \ref{thm:alg}]
Based on Lemma \ref{lem:t_k}, we have
\begin{align*}
f({\bm x}_{k+1}) - f({\bm x}_k) \leq -\sigma \{\beta \epsilon^2, \beta\|\mcb{G}_k\|^2 + \frac{1}{2} \beta^{2\alpha}\vert\lambda_k\vert^3\}.
\end{align*}
Since $\lim_{k\rightarrow\infty} f({\bm x}_k) > -\infty$, we must have
\begin{align*}
\lim_{k\rightarrow \infty}\mcb{G}_k =\mb{0}, \quad\text{and}\quad \lim_{k\rightarrow\infty}\lambda_k = 0.
\end{align*}
\end{proof}

\begin{remark}
In Algorithm \ref{curvlinear_search}, we can relax the requirement of finding the algebraically smallest eigenpair of $\mcb{H}_k$. Instead, it is sufficient to find $(\bar \lambda, \bar{\bm v})$ satisfying
\begin{flalign}
\norm{\bar{\bm v}}{}= 1, \; \bar{\bm v}^\top \mcb{H}_k \bar{\bm v} &\le \max\{-\delta, \;\lambda_k^{\min}\}, \; \bar{\bm v}^\top \mcb{G}_k \le 0 \;\; \mbox{and} \;\; \bar{\bm v}^\top \mcb{H}_k \bar{\bm v} \le 0
\end{flalign}
where $\delta >0$ is a prescribed constant.
\end{remark}

\subsection{Examples of $\Omega$ and $\Pi_{\Omega}(\cdot)$}

For Algorithm 1 to be practical, the projection of a point onto the feasible set $\Omega$ must be affordable.
In this subsection, we enumerate a number feasible sets $\Omega$, defined by equality constraints, that are frequently encountered in optimization problems and have computationally tractable projections $\Pi_{\Omega}(\cdot)$:

\begin{table}[ht]
\caption{Examples of $\Omega$ and $\Pi_{\Omega}(\cdot)$} 
\centering 
\begin{tabular}{l |c | c } 
\hline
Constraint Sets & $\Omega$ & $\Pi_{\Omega}(\cdot)$  \\ [1ex] 
\hline 
spherical &$\set{\mb X \in \reals^{n \times m} \;\vert\; \norm{\mb X}{F} = 1 }$ & $\mb X / \norm{\mb X}{F}$  \\[1ex]
multiple spherical & $\bigcup_i \{ \mb x_i \in \reals^{n_i}  \;\vert\; \norm{\mb x_i}{2} = 1 \} $ & $ \mb x_i / \norm{\mb x_i}{2} , ~ \forall ~ i $
\\ [1ex]
orthogonality & $\set{ \mb X \in \reals^{n \times m} \;\vert\; \mb X^T \mb X =I }$ &
$\mb U \mb V^\t, ~\mbox{with}~ \mb X = \mb U \mb \Sigma \mb V^\t \mbox{ as SVD}$
\\ [1ex]

\hline 
\end{tabular}
\label{table:ex_omega} 
\end{table}



  There is a wide range of applications in which one is interested in solving optimization problems with constraints as listed in Table \ref{table:ex_omega}. The paper \cite{wen2013feasible} provides an extensive list of such problems and references to particular applications. These include eigenvalue and subspace tracking problems arising in signal processing; low-rank matrix optimization problems such as those that arise in SDP relaxation of combinatorial problems (e.g., the max-cut problem described in section 2); $p$-harmonic flows and other problems involving normal preserving constraints such as those that arise in $1$-bit compressive sensing, color image denoising, micromagnetics, liquid crystal theory, and directional diffusion; homogeneous polynomial optimization with spherical constraints arising in tensor eigenvalue problems signal processing, MRI, data training, approximation theory, portfolio selection and computation of the stability number of a graph; sparse principal component analysis, electronic structures computation, etc.
\subsection{Discussion: extension to general constrained problems}
We briefly discuss possibilities of generalizing the proposed algorithm to optimization problems with inequality constraints, i.e., problems of the form:
\begin{align*}
\begin{array}{ll}
\text{minimize} & f({\bm x}) \\
\mbox{subject to}& c_i({\bm x}) = 0, \; i = 1, 2, \ldots, m\\
 & c_i({\bm x}) \leq 0, \; i = m+1, 2, \ldots, k\\
& {\bm x} \in \mr{R}^n.
\end{array}
\end{align*}
  By adding a squared slack variable to each inequality constraint, since $c_i({\bm x}) \leq 0 \Leftrightarrow c_i({\bm x}) + z_i^2 = 0$, one can transform a problem
with inequality constraints to one that has only equality constraints. Moreover,
it can be verified that the conditions imposed in Assumption 2 on the constraints $c_i({\bm x})$, carry over to the transformed constraints. Therefore, algorithm \ref{curvlinear_search} can be applied again. But one caveat of this reformulation is that additional second-order critical points might be introduced. We leave it as future work to investigate better approaches to handling general inequality constraints.

\section{Appendix}\label{sec:append}
\subsection{Proof of Lemma \ref{key_lemma}}

We need the proposition below in the proof.
\begin{proposition}\label{the_prop}(\cite[Lemma 33]{GHJY15})
Assume that Assumption 2 holds and define $R = \sqrt{1 /  (\sum_{i=1}^m L_{c_i,1}^2/\sigma_0^2)} $. For any ${\bm x}_0 \in \Omega$, any $\mb{v} \in \mr{R}^m$, let ${\bm x}_1 = {\bm x}_0 + \mb{v}$ and ${\bm x}_2 = {\bm x}_0 + \mc{P}_{{\bm x}_0}(\mb{v})$. Then we have
\begin{align*}
\norm{\Pi_\Omega({\bm x}_1) - {\bm x}_2} \leq \frac{4\|\mb{v}\|^2}{R}.
\end{align*}
\end{proposition}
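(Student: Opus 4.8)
The plan is to show that the true projection $\bm p := \Pi_\Omega(\bm x_1)$ and its first-order tangent surrogate $\bm x_2 = \bm x_0 + \mc P_{\bm x_0}(\bm v)$ agree up to second order in $\norm{\bm v}{}$, by analyzing the displacement $\bm w := \bm p - \bm x_0$ through its tangent and normal components at $\bm x_0$. Throughout I write $A := \nabla\bm c(\bm x_0)$, whose columns span $\mc N_\Omega(\bm x_0)$ and which satisfies $\sigma_{\min}(A)\ge\sigma_0$ by Assumption 2(c), and I decompose $\bm v = \bm v_T + \bm v_N$ and $\bm w = \bm w_T + \bm w_N$ into their $\mc P_{\bm x_0}$-projections onto $\mc T_\Omega(\bm x_0)$ and $\mc N_\Omega(\bm x_0)$ (here $\bm v$ is treated as an ambient vector in $\reals^n$). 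Since $\bm p - \bm x_2 = \bm w - \bm v_T = (\bm w_T - \bm v_T) + \bm w_N$, it suffices to bound the tangent discrepancy $\norm{\bm w_T - \bm v_T}{}$ and the normal part $\norm{\bm w_N}{}$, each by $2\norm{\bm v}{}^2/R$, where $R = \sigma_0/\sqrt{\Lambda_1}$ with $\Lambda_1 = \sum_i L_{c_i,1}^2$. Two crude a priori bounds will be used repeatedly: since $\bm x_0\in\Omega$, $\norm{\bm x_1 - \bm p}{} = \mathrm{dist}(\bm x_1,\Omega)\le\norm{\bm x_1 - \bm x_0}{} = \norm{\bm v}{}$, and hence $\norm{\bm w}{}\le\norm{\bm p - \bm x_1}{} + \norm{\bm v}{}\le 2\norm{\bm v}{}$.

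First I would bound $\norm{\bm w_N}{}$. Because $\bm p$ is feasible, $c_i(\bm p) = c_i(\bm x_0) = 0$; subtracting the first-order Taylor expansion of $c_i$ at $\bm x_0$ and invoking the Lipschitz continuity of $\nabla c_i$ (Assumption 2(a)) gives $\abs{\nabla c_i(\bm x_0)^\top\bm w}\le\tfrac{1}{2}L_{c_i,1}\norm{\bm w}{}^2$, i.e. $\norm{A^\top\bm w}{}\le\tfrac{1}{2}\sqrt{\Lambda_1}\,\norm{\bm w}{}^2$. Since $A^\top\bm w_T = \bm 0$ we have $A^\top\bm w = A^\top\bm w_N$, and because $\bm w_N\in\mf R(A)$ with $\sigma_{\min}(A)\ge\sigma_0$ we get $\norm{A^\top\bm w_N}{}\ge\sigma_0\norm{\bm w_N}{}$; combining these and using $R = \sigma_0/\sqrt{\Lambda_1}$ yields $\norm{\bm w_N}{}\le\norm{\bm w}{}^2/(2R)\le 2\norm{\bm v}{}^2/R$.

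The tangent discrepancy is the crux. Here I would use the first-order optimality of the projection: since $\bm p$ minimizes the distance to $\bm x_1$ over the smooth manifold $\Omega$ (smoothness at $\bm p$ supplied by $\sigma_0$-LICQ), the residual $\bm n := \bm x_1 - \bm p$ is orthogonal to $\mc T_\Omega(\bm p)$, i.e. $\bm n \in \mc N_\Omega(\bm p) = \mf R(\nabla\bm c(\bm p))$, so $\bm n = \nabla\bm c(\bm p)\,\bm\nu$ for some $\bm\nu$ with $\norm{\bm\nu}{}\le\norm{\bm n}{}/\sigma_0$. Applying $\mc P_{\bm x_0}$ and using $\mc P_{\bm x_0}(\bm x_1 - \bm p) = \bm v_T - \bm w_T$ together with the key cancellation $\mc P_{\bm x_0}\nabla\bm c(\bm x_0) = \bm 0$, I would write $\bm v_T - \bm w_T = \mc P_{\bm x_0}\big(\nabla\bm c(\bm p) - \nabla\bm c(\bm x_0)\big)\bm\nu$. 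The Lipschitz bound $\norm{\nabla\bm c(\bm p) - \nabla\bm c(\bm x_0)}{}\le\sqrt{\Lambda_1}\,\norm{\bm w}{}$ then gives $\norm{\bm v_T - \bm w_T}{}\le\tfrac{\sqrt{\Lambda_1}}{\sigma_0}\norm{\bm w}{}\,\norm{\bm n}{} = \norm{\bm w}{}\,\norm{\bm n}{}/R\le 2\norm{\bm v}{}^2/R$. Adding the two bounds yields $\norm{\bm p - \bm x_2}{}\le 4\norm{\bm v}{}^2/R$ exactly.

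The main obstacle is this last step: one must recognize that the tangent part of the displacement cannot be controlled by feasibility alone and genuinely requires the variational characterization of the projection ($\bm x_1 - \bm p\perp\mc T_\Omega(\bm p)$), and then exploit that the normal directions at $\bm p$ and at $\bm x_0$ differ by only $O(\norm{\bm w}{})$ — captured cleanly through $\mc P_{\bm x_0}\nabla\bm c(\bm x_0) = \bm 0$ — so that projecting $\bm n$ onto the tangent space at $\bm x_0$ produces a second-order, rather than first-order, quantity. A minor technical point to verify in passing is that $\bm p$ lies in a region where $\Omega$ is a genuine smooth manifold, so that $\mc N_\Omega(\bm p)$ is a subspace and the orthogonality condition applies; this is guaranteed by the uniform $\sigma_0$-LICQ of Assumption 2(c).
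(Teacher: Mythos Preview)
The paper does not give its own proof of this proposition; it is simply quoted from \cite[Lemma~33]{GHJY15}. Your argument is correct and self-contained: the decomposition $\bm p - \bm x_2 = (\bm w_T - \bm v_T) + \bm w_N$ together with the feasibility-based bound on $\norm{\bm w_N}{}$ and the projection-optimality bound on $\norm{\bm w_T - \bm v_T}{}$ (exploiting $\mc P_{\bm x_0}\nabla\bm c(\bm x_0)=\bm 0$) is exactly the right mechanism, and the constants add up to $4/R$ as stated. One small remark: your Taylor step $\abs{\nabla c_i(\bm x_0)^\top\bm w}\le\tfrac12 L_{c_i,1}\norm{\bm w}{}^2$ tacitly uses Lipschitz continuity of $\nabla c_i$ along the segment $[\bm x_0,\bm p]$, which need not lie in $\Omega$, whereas Assumption~2(a) is literally stated ``over $\Omega$''; the paper makes the same silent extension in its own Taylor arguments (e.g.\ in the proof of Lemma~\ref{key_lemma}), so this is not a defect of your proof relative to the paper's standards.
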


  Now we are ready to prove Lemma \ref{key_lemma}.
\begin{proof}[Proof of Lemma \ref{key_lemma}]
To prove (\ref{1st_order_taylor}), for any $\bm x \in \Omega$, let $\bm y = \Pi_\Omega(\bm x + \bm\delta)$ and consider the Lagrangian $\mc{L}(\bm y, \bm\lambda^*(\bm x)) = f(\bm y) - \sum_{i=1}^m \lambda^*_i(\bm{x})c_i(\bm y)$. Since $\bm x, \bm y \in \Omega$, $\bm c(\bm x) = \bm c (\bm y) = \bm 0$ and hence $\mc{L}(\bm y, \bm\lambda^*(\bm x)) = f(\bm y)$ and $\mc{L}(\bm x, \bm\lambda^*(\bm x)) = f(\bm x)$. Therefore by Taylor expansion, we have for some $s \in (0, 1)$
\begin{align}
f(\bm y) &= \mc{L}(\bm y, \bm\lambda^*(\bm x)) = \mc{L}(\bm x, \bm\lambda^*(\bm x)) + \nabla_{\bm x}\mc{L}(\bm x + s(\bm y - \bm x), \bm\lambda^*(\bm x))^\top (\bm y - \bm x) \nonumber\\
&= f(\bm x) + \nabla_{\bm x} \mc{L}(\bm x, \bm \lambda^*(\bm x))^\top (\bm y - \bm x) + \{\nabla_{\bm x}\mc{L}(\bm x + s(\bm y - \bm x), \bm\lambda^*(\bm x)) - \nabla_{\bm x} \mc{L}(\bm x, \bm \lambda^*(\bm x))\}^\top (\bm y - \bm x) \nonumber \\
&= f(\bm x) + \mc{G}(\bm x)^\top \bm \delta + \mc{G}(\bm x)^\top\{\bm y - (\bm x + \bm \delta)\} \nonumber \tag{by definition of $\mc{G}(\bm x)$} \\
&\quad +\{\nabla f(\bm x + s(\bm y - \bm x)) - \nabla f(\bm x)\}^\top (\bm y - \bm x) - \sum_{i=1}^m\lambda_i^*(\bm x)\{\nabla c_i(\bm x + s(\bm y - \bm x)) - \nabla c_i(\bm x)\}^\top (\bm y - \bm x). \nonumber
\end{align}
Hence using Assumption 2(b), Cauchy-Schwartz and the fact that $s < 1$, we have
\begin{align}
\vert f(\bm y) -  f(\bm x) - \mc{G}(\bm x)^\top \bm \delta\vert &\leq \|\mc{G}(\bm x)\|\|\bm y - (\bm x + \bm \delta)\| + \{L_{f, 1} + \sum_{i=1}^m\lambda^*_i(\bm x)L_{c_i, 1} \} \| \bm y - \bm x \|^2  \nonumber\\
&\leq \|\mc{G}(\bm x)\|\|\bm y - (\bm x + \bm \delta)\| + \{L_{f, 1} + \|\bm\lambda^*(\bm x)\|\sqrt{\Lambda_1}\}\| \bm y - \bm x \|^2 . \label{first_order_taylor}
\end{align}
Both $\| \bm\lambda^*(\bm x) \|$ and $\|\mc{G}(\bm x) \|$ can be bounded by constants. From (\ref{lambda_express}), use of the assumptions that $\inf_{\bm x \in \Omega}\sigma_{\text{min}}(\nabla\bm{c}({\bm x})) \ge \sigma_0 $, $\sup_{\bm x \in \Omega} \|\nabla f(\bm x)\| \leq \gamma_{f, 1}$, $\sup_{\bm x \in \Omega} \|\nabla c_i(\bm x)\| \leq \gamma_{c_i, 1}$ and the definition of Frobenius norm,
\begin{align}
\|\boldsymbol \lambda^\star(\boldsymbol x)\| \leq \frac{\gamma_{f, 1}\sqrt{\Gamma_1}}{\sigma_0^2} \label{lambda_bound}
\end{align}
While
\begin{align}
\| \mc{G}(\bm x) \| &= \|\nabla f(\bm x) - \sum_{i=1}^m \lambda_i^*(\bm x) \nabla c_i(\bm x) \| \leq \| \nabla f(\bm x) \| + \| \bm \lambda^*(\bm x) \| \| \nabla \bm c(\bm x) \|_F \nonumber \\
&\leq \gamma_{f, 1} + \frac{\gamma_{f, 1}\Gamma_1}{\sigma_0} = \gamma_{f, 1}\{1 + \frac{\Gamma_1}{\sigma_0^2}\}. \label{G_bound}
\end{align}
To bound $\| \bm y - \bm x\|$, by definition of $\bm y$ and $\Pi_\Omega(\cdot)$ we have
\begin{align}
\| \bm y - \bm x \| = \| \Pi_\Omega(\bm x + \delta) - \bm x \| \leq \| \Pi_\Omega(\bm x + \delta) - (\bm x + \bm \delta)\| + \| \bm \delta \| \leq 2\|\bm \delta \| \label{diff_bound}
\end{align}
As above $\|\bm y - (\bm x + \bm \delta) \| \leq \| \bm\delta \|$. However, we also need to bound $\|\bm y - (\bm x + \bm \delta) \|$ in terms of $\| \bm \delta \|^2$ to facilitate our analysis. Specifically, since $\bm \delta \in \mc{T}_{\Omega}(\bm x)$ and $\bm x \in \Omega$, by Proposition 1, we have
\begin{align}
\|\bm y - (\bm x + \bm \delta) \| = \|\Pi_\Omega(\bm x + \bm \delta) -(\bm x + \bm \delta) \| \leq \frac{4}{R^2}\| \bm\delta\|^2 \label{tricki_bound}
\end{align}
where $R = \sigma_0 / \sqrt{\Lambda_1}$. Now plugging (\ref{tricki_bound}), (\ref{diff_bound}), (\ref{G_bound}), and (\ref{lambda_bound}) into (\ref{first_order_taylor}) we obtain the desired result (\ref{1st_order_taylor}), that is,
\begin{align*}
| f(\bm y) - f(\bm x) - \mc{G}(\bm x)^\top   \bm \delta  | \leq C_0 \| \bm \delta \|^2.
\end{align*}
where $C_0 = \gamma_{f, 1}\{1 + \frac{\Gamma_1}{\sigma_0^2}\}\frac{4}{R^2} + 4(L_{f, 1} + \frac{\gamma_{f, 1}\sqrt{\Gamma_1\Lambda_1}}{\sigma_0^2} )$.

  To prove (\ref{key_taylor}), for every ${\bm x, \bm y} \in {\Omega}$, by the definition of the Lagrangian, we have $f({\bm x}) = \mc{L}({\bm x}, \boldsymbol\lambda^\star({\bm x}))$ and $f(\bm{y}) = \mathcal{L}(\bm{y}, \boldsymbol\lambda^\star({\bm x}))$. Let $\bm \eta = {\bm y} - {\bm x}$. Then by Taylor's theorem, there exists a $t \in (0, 1)$ such that
\begin{align}
f({\bm y}) = \mc{L}({\bm y}, \boldsymbol\lambda^\star({\bm x})) &= \mathcal{L}({\bm x}, \boldsymbol\lambda^\star({\bm x})) + \nabla_{\bm x}\mathcal{L}({\bm x}, \boldsymbol\lambda^\star({\bm x}))^\top{\bm \eta} + \frac{1}{2}{\bm \eta}^\top \nabla_{\mathbf{xx}}^2\mc{L}({\bm x} + t{\bm \eta}, \boldsymbol\lambda^\star({\bm x})){\bm \eta} \nonumber\\
&= f({\bm x}) + \mathcal{G}({\bm x})^\top{\bm \eta} + \frac{1}{2}{\bm \eta}^\top \nabla^2_{\mathbf{xx}}\mc{L}({\bm x}, \boldsymbol\lambda^\star({\bm x})){\bm \eta} \nonumber\\
&+ \frac{1}{2}{\bm \eta}^\top (\nabla^2_{\mathbf{xx}}\mathcal{L}({\bm x} + t{\bm \eta}, \boldsymbol\lambda^\star({{\bm x}})) - \nabla^2_{\mathbf{xx}}\mc{L}({\bm x}, \boldsymbol\lambda^\star({\bm x}))){\bm \eta} \label{feasible_expansion_estimate}.
\end{align}
Then the last term in $(\ref{feasible_expansion_estimate})$ can be bounded as follows
\begin{align*}
&\quad\quad  \vert \frac{1}{2}{\bm \eta}^\top \{\nabla^2_{\bm x \bm x}\mathcal{L}({\bm x} + t{\bm \eta}, \boldsymbol\lambda^\star({\bm x})) -
\nabla^2_{\bm x \bm x}\mathcal{L}({\bm x}, \boldsymbol\lambda^\star({\bm x}))\}{\bm \eta} \vert \\
&\leq \frac{1}{2}\|{\bm \eta}\|^2 \|\nabla^2_{\bm x \bm x}\mathcal{L}({\bm x} + t{\bm \eta}, \boldsymbol\lambda^\star({\bm x})) - \nabla^2_{\bm x \bm x}\mathcal{L}({\bm x}, \boldsymbol\lambda^\star({\bm x}))\|\\
&=\frac{1}{2}\|{\bm \eta}\|^2 \|(\nabla^2f({\bm x} + t{\bm \eta}) - \nabla^2f({\bm x})) + \sum_{i = 1}^m\lambda^\star_i({\bm x}) (\nabla^2c_i({\bm x} + t{\bm \eta}) - \nabla^2c_i({\bm x}))\| \\
&\leq \frac{1}{2}\|{\bm \eta}\|^2 \big( \|\nabla^2f({\bm x} + t{\bm \eta}) - \nabla^2f({\bm x})\| + \sum_{i=1}^m\vert\lambda^\star_i({{\bm x}})\vert \ \|\nabla^2c_i({\bm x} + t{\bm \eta}) - \nabla^2c_i({\bm x})\|  \big) \\
&\leq \frac{1}{2}\|{\bm \eta}\|^3 L_{f, 2}t + \frac{1}{2}\|{\bm \eta}\|^2\|\boldsymbol\lambda^\star({\bm x})\|\sqrt{\sum_{i=1}^m\|\nabla^2 c_i({\bm x} + t{\bm \eta}) - \nabla^2c_i({\bm x})\|^2} \tag{Cauchy-Schwartz inequailty}\\
&\leq \frac{1}{2}\|{\bm \eta}\|^3 L_{f, 2} + \frac{\gamma_{f,1} \sqrt{\Gamma_1}}{2\sigma_0^2}\sqrt{\sum_{i=1}^m L_{c_i, 2}^2t^2}\cdot \|{\bm \eta}\|^3 \tag{by (\ref{lambda_bound})}\\
&\leq \big\{\frac{L_{f, 2}}{2} + \frac{\gamma_{f, 1}}{2\sigma_0^2}\sqrt{\Gamma_1\Lambda_2}\big\}\|\bm\eta \|^3\\
&= C_1\|{\bm \eta}\|^3,
\end{align*}
where $C_1 = \big\{\frac{L_{f, 2}}{2} + \frac{\gamma_{f, 1}}{2\sigma_0^2}\sqrt{\Gamma_1\Lambda_2}\big\}$
Combining the above inequality with (\ref{feasible_expansion_estimate}), we obtain
\begin{align}\label{eqn:standard_taylor}
\big \vert f({\bm y}) -  f({\bm x}) - \mathcal{G}({\bm x})^\top \bm\eta - \frac{1}{2}{\bm \eta}^\top\{ \nabla^2_{\bm x \bm x}\mathcal{L}({\bm x}, \boldsymbol\lambda^\star({\bm x}))\}{\bm \eta}\big \vert \leq  C_1\|{\bm \eta}\|^3.
\end{align}
We consider two case: (i) $\bm{x}_0 + \boldsymbol\delta \in \Omega$, and (ii) $\bm{x}_0 + \boldsymbol\delta \not\in \Omega$.\\
Case (i): Substituting $\bm y = \bm x_0 + \bm \delta = \Pi_{\Omega}(\bm x_0 + \bm \delta)$ and $\bm x = \bm x_0$ into \eqref{eqn:standard_taylor}, and noting that $\boldsymbol\eta = \bm y - \bm x = \boldsymbol\delta$, we have from (\ref{eqn:standard_taylor}) and the facts that $\mc{H}(\bm{x}_0) = \mc{P}_{\bm{x}_0}^\top \nabla_{\bm x \bm x}^2 \mc{L}(\bm x_0, \bm \lambda^\star(\bm x_0)) \mc{P}_{\bm{x}_0}$ and $\bm \delta \in \mc{T}_\Omega(\bm x_0)$, hence that $\mc{P}_{\bm x_0}\bm\delta = \bm\delta$.
\begin{align*}
&\quad \big \vert f(\Pi_{\Omega}(\bm x_0 + \bm \delta)) -  f(\bm x_0) - \mathcal{G}({\bm x_0})^\top \bm\delta - \frac{1}{2}{\bm \delta}^\top\mc{H}(\bm x_0){\bm \delta}\big \vert \leq  C_1\|{\bm \delta}\|^3.
\end{align*}
Case (ii): ${\bm x}_0 + {\bm \delta} \not\in \Omega$. Letting ${\bm y_0} = \Pi_\Omega({\bm x}_0 +{\bm \delta})$, we have from (\ref{diff_bound}) and then from (\ref{eqn:standard_taylor}) that
\begin{flalign}\label{eqn:inequality_1}
&\quad\big\vert f({\bm y_0}) - f({\bm x}_0) - \mc{G}({\bm x}_0)^\top ({\bm y_0} - {\bm x}_0) - \frac{1}{2}({\bm y_0} - {\bm x}_0)^\top \{ \nabla^2_{\bm x \bm x}\mathcal{L}({\bm x_0}, \boldsymbol\lambda^\star({\bm x_0}))\}({\bm y_0} - {\bm x}_0) \big\vert \leq 8 C_1 \norm{\bm \delta}{}^3
\end{flalign}
Let $\bm \xi = (\bm x_0 + \bm \delta) - \bm y_0 = (\bm x_0 + \bm \delta) -  \Pi_\Omega({\bm x}_0 +{\bm \delta})$. Then clearly $\bm y_0 - \bm x_0 = \bm \delta - \bm \xi$, and \eqref{eqn:inequality_1} can be rewritten as
\begin{flalign} \label{expansion}
\big\vert f({\bm y_0}) - f({\bm x}_0) - \mc{G}({\bm x}_0)^\top (\bm \delta - \bm \xi) - \frac{1}{2}(\bm \delta - \bm \xi)^\top \{ \nabla^2_{\bm x \bm x}\mathcal{L}({\bm x_0}, \boldsymbol\lambda^\star({\bm x_0}))\}(\bm \delta - \bm \xi) \big\vert  &\le 8 C_1 \norm{\bm \delta}{}^3.
\end{flalign}
We further note that
\begin{align}
&\quad f({\bm y_0}) - f({\bm x}_0) - \mc{G}({\bm x}_0)^\top (\bm \delta - \bm \xi) - \frac{1}{2}(\bm \delta - \bm \xi)^\top \{ \nabla^2_{\bm x \bm x}\mathcal{L}({\bm x_0}, \boldsymbol\lambda^\star({\bm x_0}))\}(\bm \delta - \bm \xi)\\
&= f({\bm y_0}) - f({\bm x}_0) - \mc{G}(\bm x_0)^\top\bm \delta - \frac{1}{2}\bm \delta^\top \{ \nabla^2_{\bm x \bm x}\mathcal{L}({\bm x_0}, \boldsymbol\lambda^\star({\bm x_0}))\}\bm \delta \nonumber \\
&+ \mc{G}(\bm x_0)^\top\bm \xi + \bm \delta^\top \{ \nabla^2_{\bm x \bm x}\mathcal{L}({\bm x_0}, \boldsymbol\lambda^\star({\bm x_0}))\}\bm \xi - \frac{1}{2}\bm \xi^\top \{ \nabla^2_{\bm x \bm x}\mathcal{L}({\bm x_0}, \boldsymbol\lambda^\star({\bm x_0}))\}\bm \xi \label{eqn:last_three_terms}
\end{align}
Next, we will show that the last three terms in \eqref{eqn:last_three_terms} satisfy
\begin{flalign}
\mc{G}(\bm x_0)^\top\bm \xi + \bm \delta^\top \{ \nabla^2_{\bm x \bm x}\mathcal{L}({\bm x_0}, \boldsymbol\lambda^\star({\bm x_0}))\}\bm \xi - \frac{1}{2}\bm \xi^\top \{ \nabla^2_{\bm x \bm x}\mathcal{L}({\bm x_0}, \boldsymbol\lambda^\star({\bm x_0}))\}\bm \xi = O(\norm{\bm \delta}{}^3).
\end{flalign}
The following are helpful in establishing this. First, from the definition of $\bm \xi = (\bm x_0 + \bm \delta) -  \Pi_\Omega({\bm x}_0 +{\bm \delta})$ and Proposition 1, one has $\|{\bm \xi}\| \leq  4\|{\bm \delta}\|^2/R$ as well as the bound $\|\bm\xi\| \leq \|\bm \delta\|$. Second, since $\mc{G}({\bm x}_0)\in \mc{T}_\Omega(\bm x_0)$, we have $\mc{G}({\bm x}_0)^\top{\bm \xi} = \mc{G}({\bm x}_0)^\top\mc{P}_{{\bm x}_0}{\bm \xi}$. Third, since $\bm y_0 = \Pi_\Omega(\bm x_0 + \bm \delta)$, $\mc{P}_{\bm y_0}\{(\bm x_0 + \bm\delta) - \Pi_\Omega(\bm x_0 + \bm \delta)\} = \bm 0$. Using these observations and the fact that $\|{\bm y_0} - {\bm x}_0\| \leq 2\|{\bm \delta}\|$, we are ready to bound these three terms one by one,
\begin{align}
\vert {\bm \delta}^\top \{ \nabla^2_{\bm x \bm x}\mathcal{L}({\bm x_0}, \boldsymbol\lambda^\star({\bm x_0}))\}{\bm \xi} \vert  &\leq \{ \| \nabla^2f({\bm x}_0)\| + \sum_{i=1}^m \vert \lambda_i^*({\bm x}_0)\vert \ \|\nabla^2 c_i({\bm x}_0)\|\} \|{\bm \delta}\|\|{\bm \xi}\| \nonumber \\
&\leq  \Big\{\|\nabla^2f({\bm x}_0)\| + \|\boldsymbol\lambda^*({\bm x}_0)\|\sqrt{\sum_{i=1}^m  \|\nabla^2 c_i({\bm x}_0)\|^2}\Big\}  \|{\bm \delta}\|\|{\bm \xi}\| \tag{by Cauchy-Schwartz} \nonumber\\
&\leq \{\gamma_{f, 2} + \frac{\gamma_{f, 1}}{2\sigma_0^2}\sqrt{\Gamma_1\Gamma_2}\}\frac{4}{R}\|{\bm \delta}\|^3 \tag{by (\ref{lambda_bound}) and $\norm{\bm\xi}{}\leq 4\norm{\bm\delta}{}^2/R$} \nonumber\\
&= C_2 \|{\bm \delta}\|^3, \label{part_1}
\end{align}
where $C_2 = \{\gamma_{f, 2} + \frac{\gamma_{f, 1}}{2\sigma_0^2}\sqrt{\Gamma_1\Gamma_2}\}\frac{4}{R}$.
 Applying exactly the same method to bound $\frac{1}{2}{\bm \xi}^\top\{ \nabla^2_{\bm x \bm x}\mathcal{L}({\bm x_0}, \boldsymbol\lambda^\star({\bm x_0}))\}{\bm \xi}$ we have
\begin{align}
\frac{1}{2}\vert {\bm \xi}^\top\{ \nabla^2_{\bm x \bm x}\mathcal{L}({\bm x_0}, \boldsymbol\lambda^\star({\bm x_0}))\}{\bm \xi}\vert &\leq \frac{1}{2}\{ \| \nabla^2f({\bm x}_0)\| + \sum_{i=1}^m \vert \lambda_i^*({\bm x}_0)\vert \ \|\nabla^2 c_i({\bm x}_0)\|\}\|{\bm \xi}\|^2 \nonumber\\
 &\leq \{\gamma_{f, 2} + \frac{\gamma_{f, 1}}{2\sigma_0^2}\sqrt{\Gamma_1\Gamma_2}\}\frac{1}{2}\|{\bm \xi}\|^2  \tag{by (\ref{lambda_bound})}\\
 &\leq \{\gamma_{f, 2} + \frac{\gamma_{f, 1}}{2\sigma_0^2}\sqrt{\Gamma_1\Gamma_2}\}\frac{2}{R}\|{\bm \delta}\|^3 \tag{ by $\norm{\bm\xi}{}\leq 4\norm{\bm\delta}{}^2/R$ and $\|\bm\xi\| \leq \|\bm\delta\|$} \nonumber \\
&= C_3 \|{\bm \delta}\|^3, \label{part_2}
\end{align}
where $C_3 = \{\gamma_{f, 2} + \frac{\gamma_{f, 1}}{2\sigma_0^2}\sqrt{\Gamma_1\Gamma_2}\}\frac{2}{R}$.
To bound the last term $\mc{G}(\bm{x}_0)^\top\bm\xi$, we first note that $\mc{G}(\bm{x}_0)^\top\bm\xi = \mc{G}(\bm{x}_0)^\top\mc{P}_{\bm{x}_0}\bm\xi$ since $\mc{G}(\bm{x}_0) \in \mc{T}_{\Omega}(\bm{x}_0)$ and $\mc{P}_{\bm{y}_0}\bm\xi = \bm 0$ since $\bm\xi = \bm{y}_0 - \Pi_\Omega \bm{y}_0$. Therefore
\begin{align}
\mc{G}(\bm{x}_0)^\top \bm\xi = \mc{G}(\bm{x}_0)^\top \mc{P}_{\bm{x}_0}\bm\xi = \mc{G}(\bm{x}_0)^\top\{\mc{P}_{\bm{x}_0}\bm\xi - \mc{P}_{\bm{y}_0}\bm\xi\} \leq \norm{\mc{G}(\bm{x}_0)}{}\norm{\mc{P}_{\bm{x}_0}\bm\xi - \mc{P}_{\bm{y}_0}\bm\xi}{}. \label{tough_bound}
\end{align}
We first derive a bound for $\norm{\mc{P}_{\bm{x}_0}\bm\xi - \mc{P}_{\bm{y}_0}\bm\xi}{}$. Note that $\mc{P}_{\bm{x}} = \nabla\bm{c}({\bm x})\{\nabla\bm{c}({\bm x})^\top \nabla\bm{c}({\bm x})\}^{-1}\nabla\bm{c}({\bm x})^\top$; thus
\begin{align}
\norm{\mc{P}_{\bm{x}_0}\bm\xi - \mc{P}_{\bm{y}_0}\bm\xi}{} &\leq \norm{\mc{P}_{\bm{x}_0} - \mc{P}_{\bm{y}_0}}{}\norm{\bm\xi}{} \nonumber \\
&= \norm{\bm\xi}{}\norm{\nabla\bm{c}({\bm x_0})\{\nabla\bm{c}({\bm x_0})^\top \nabla\bm{c}({\bm x_0})\}^{-1}\nabla\bm{c}({\bm x_0})^\top -\nabla\bm{c}({\bm y_0})\{\nabla\bm{c}({\bm y_0})^\top \nabla\bm{c}({\bm y_0})\}^{-1}\nabla\bm{c}({\bm y_0})^\top }{}. \nonumber \\
&= \norm{\bm\xi}{} \|[\nabla\bm{c}({\bm x_0})\{\nabla\bm{c}({\bm x_0})^\top \nabla\bm{c}({\bm x_0})\}^{-1}\nabla\bm{c}({\bm x_0})^\top - \nabla\bm{c}({\bm x_0})\{\nabla\bm{c}({\bm x_0})^\top \nabla\bm{c}({\bm x_0})\}^{-1}\nabla\bm{c}({\bm y_0})^\top] \nonumber \\
&\quad + [\nabla\bm{c}({\bm x_0})\{\nabla\bm{c}({\bm x_0})^\top \nabla\bm{c}({\bm x_0})\}^{-1}\nabla\bm{c}({\bm y_0})^\top - \nabla\bm{c}({\bm y_0})\{\nabla\bm{c}({\bm y_0})^\top \nabla\bm{c}({\bm y_0})\}^{-1}\nabla\bm{c}({\bm y_0})^\top] \| \tag{adding and subtracting a term} \nonumber \\
&\leq \norm{\bm\xi}{} \Big\{ \|\nabla\bm{c}({\bm x_0})\{\nabla\bm{c}({\bm x_0})^\top \nabla\bm{c}({\bm x_0})\}^{-1}\nabla\bm{c}({\bm x_0})^\top - \nabla\bm{c}({\bm x_0})\{\nabla\bm{c}({\bm x_0})^\top \nabla\bm{c}({\bm x_0})\}^{-1}\nabla\bm{c}({\bm y_0})^\top \| \nonumber \\
&\quad +\| \nabla\bm{c}({\bm x_0})\{\nabla\bm{c}({\bm x_0})^\top \nabla\bm{c}({\bm x_0})\}^{-1}\nabla\bm{c}({\bm y_0})^\top - \nabla\bm{c}({\bm y_0})\{\nabla\bm{c}({\bm y_0})^\top \nabla\bm{c}({\bm y_0})\}^{-1}\nabla\bm{c}({\bm y_0})^\top \| \Big\} \tag{triangle inequality} \\
&\leq \norm{\bm\xi}{} \Big\{ \|\nabla\bm{c}({\bm x_0})\{\nabla\bm{c}({\bm x_0})^\top \nabla\bm{c}({\bm x_0})\}^{-1} \| \| \nabla\bm{c}({\bm x_0}) - \nabla\bm{c}({\bm y_0})\| \label{norm_bound_1}\\
&\quad + \|  \nabla\bm{c}({\bm x_0})\{\nabla\bm{c}({\bm x_0})^\top \nabla\bm{c}({\bm x_0})\}^{-1} -  \nabla\bm{c}({\bm y_0})\{\nabla\bm{c}({\bm y_0})^\top \nabla\bm{c}({\bm y_0})\}^{-1} \| \| \nabla\bm{c}({\bm y_0}) \| \Big\} \label{norm_bound_2}.
\end{align}
Since
\begin{align}
\norm{\nabla \bm c(\bm x_0) - \nabla \bm c(\bm y_0)}{} \leq \norm{\nabla \bm c(\bm x_0) - \nabla \bm c(\bm y_0)}{F} &= \sqrt{\sum_{i=1}^m \vert \nabla c_i(\bm x_0) - \nabla c_i(\bm y_0) \vert^2} \nonumber \\
&\leq \sqrt{\sum_{i=1}^m L_{c_i, 1}^2} \norm{\bm x_0 - \bm y_0}{} = \sqrt\Lambda_1\norm{\bm x_0 - \bm y_0}{}, \label{difference_bound}
\end{align}
and
\begin{align}
\norm{\nabla \bm c(\bm y_0)}{} \leq \norm{\nabla \bm c(\bm y_0)}{F} = \sqrt{\sum_{i=1}^m \|\nabla c_i(\bm y_0) \|^2} \leq \sqrt{\sum_{i=1}^m \gamma_{c_i, 1}^2} = \sqrt{\Gamma_1}, \label{single_bound}
\end{align}
Moreover, by
\begin{align*}
\|\nabla\bm{c}({\bm x_0})\{\nabla\bm{c}({\bm x_0})^\top \nabla\bm{c}({\bm x_0})\}^{-1} \| &\leq \| \nabla\bm{c}({\bm x_0}) \| \|\{\nabla\bm{c}({\bm x_0})^\top \nabla\bm{c}({\bm x_0})\}^{-1} \| \\
&\leq \frac{\sqrt{\Gamma_1}}{\sigma_0^2}. \tag{by (\ref{single_bound}) and $\sigma_0$-LICQ condition}
\end{align*}
Therefore $\|\nabla\bm{c}({\bm x_0})\{\nabla\bm{c}({\bm x_0})^\top \nabla\bm{c}({\bm x_0})\}^{-1} \|\|\nabla \bm c(\bm x_0) - \nabla \bm c(\bm y_0)\| $ in (\ref{norm_bound_1}) can be bounded by
\begin{align}
\|\nabla\bm{c}({\bm x_0})\{\nabla\bm{c}({\bm x_0})^\top \nabla\bm{c}({\bm x_0})\}^{-1} \|\|\nabla \bm c(\bm x_0) - \nabla \bm c(\bm y_0)\| \leq \frac{\sqrt{\Gamma_1\Lambda_1}}{\sigma_0^2}\| \bm x_0 - \bm y_0 \|. \label{result_part_1}
\end{align}

  We need to further simplify $\|  \nabla\bm{c}({\bm x_0})\{\nabla\bm{c}({\bm x_0})^\top \nabla\bm{c}({\bm x_0})\}^{-1} -  \nabla\bm{c}({\bm y_0})\{\nabla\bm{c}({\bm y_0})^\top \nabla\bm{c}({\bm y_0})\}^{-1} \|$ in order to obtain an upper bound for (\ref{norm_bound_1}).
\begin{align}
&\quad \|  \nabla\bm{c}({\bm x_0})\{\nabla\bm{c}({\bm x_0})^\top \nabla\bm{c}({\bm x_0})\}^{-1} -  \nabla\bm{c}({\bm y_0})\{\nabla\bm{c}({\bm y_0})^\top \nabla\bm{c}({\bm y_0})\}^{-1} \| \nonumber \\
&= \| [\nabla\bm{c}({\bm x_0})\{\nabla\bm{c}({\bm x_0})^\top \nabla\bm{c}({\bm x_0})\}^{-1} - \nabla\bm{c}({\bm y_0})\{\nabla\bm{c}({\bm x_0})^\top \nabla\bm{c}({\bm x_0})\}^{-1}] \nonumber \\
&\quad + [\nabla\bm{c}({\bm y_0})\{\nabla\bm{c}({\bm x_0})^\top \nabla\bm{c}({\bm x_0})\}^{-1} -  \nabla\bm{c}({\bm y_0})\{\nabla\bm{c}({\bm y_0})^\top \nabla\bm{c}({\bm y_0})\}^{-1}] \| \nonumber\tag{add and subtract terms} \\
&\leq \| \nabla\bm{c}({\bm x_0})\{\nabla\bm{c}({\bm x_0})^\top \nabla\bm{c}({\bm x_0})\}^{-1} - \nabla\bm{c}({\bm y_0})\{\nabla\bm{c}({\bm x_0})^\top \nabla\bm{c}({\bm x_0})\}^{-1} \| \nonumber \\
&\quad + \| \nabla\bm{c}({\bm y_0})\{\nabla\bm{c}({\bm x_0})^\top \nabla\bm{c}({\bm x_0})\}^{-1} -  \nabla\bm{c}({\bm y_0})\{\nabla\bm{c}({\bm y_0})^\top \nabla\bm{c}({\bm y_0})\}^{-1}] \| \nonumber \tag{triangle inequality} \\
&\leq \| \nabla \bm c(\bm x_0) - \nabla \bm c(\bm y_0) \| \{\nabla\bm{c}({\bm x_0})^\top \nabla\bm{c}({\bm x_0})\}^{-1} \| \label{sub_bound_1}\\
&\quad + \| \nabla \bm c(\bm y_0) \| \|\{\nabla\bm{c}({\bm x_0})^\top \nabla\bm{c}({\bm x_0})\}^{-1} - \{\nabla\bm{c}({\bm y_0})^\top \nabla\bm{c}({\bm y_0})\}^{-1} \| \label{sub_bound_2}.
\end{align}
An upper bound of (\ref{sub_bound_1}) can be obtained by combining (\ref{difference_bound}) and the $\sigma_0$-LICQ condition, that is,
\begin{align}
\| \nabla \bm c(\bm x_0) - \nabla \bm c(\bm y_0) \| \{\nabla\bm{c}({\bm x_0})^\top \nabla\bm{c}({\bm x_0})\}^{-1} \| \leq \frac{\sqrt{\Lambda_1}}{\sigma^2}\|\bm x_0 - \bm y_0 \|. \label{bound_for_sub_bound_1}
\end{align}
To upper bound $ \|\{\nabla\bm{c}({\bm x_0})^\top \nabla\bm{c}({\bm x_0})\}^{-1} - \{\nabla\bm{c}({\bm y_0})^\top \nabla\bm{c}({\bm y_0})\}^{-1} \|$ in (\ref{sub_bound_2}), we need to utilize the fact that for any invertible matrices $A, B$, $\norm{A^{-1} - B^{-1}}{} \leq \norm{A^{-1}}{}\norm{A - B}{}\norm{B^{-1}}{}$ and the $\sigma_0$-LICQ condition. More specifically,
\begin{align}
 &\quad \|\{\nabla\bm{c}({\bm x_0})^\top \nabla\bm{c}({\bm x_0})\}^{-1} - \{\nabla\bm{c}({\bm y_0})^\top \nabla\bm{c}({\bm y_0})\}^{-1} \| \nonumber \\
 &\leq \|\{\nabla\bm{c}({\bm x_0})^\top \nabla\bm{c}({\bm x_0})\}^{-1} \| \| \{\nabla\bm{c}({\bm y_0})^\top \nabla\bm{c}({\bm y_0})\}^{-1} \| \|\nabla\bm{c}({\bm x_0})^\top \nabla\bm{c}({\bm x_0}) - \nabla\bm{c}({\bm y_0})^\top \nabla\bm{c}({\bm y_0}) \|\nonumber  \\
 &\leq \frac{1}{\sigma_0^4}\|\nabla\bm{c}({\bm x_0})^\top \nabla\bm{c}({\bm x_0}) - \nabla\bm{c}({\bm y_0})^\top \nabla\bm{c}({\bm y_0}) \| \tag{$\sigma_0$-LICQ condition} \nonumber \\
 &= \frac{1}{\sigma_0^4} \|\nabla\bm{c}({\bm x_0})^\top \nabla\bm{c}({\bm x_0}) - \nabla\bm{c}({\bm x_0})^\top \nabla\bm{c}({\bm y_0}) + \nabla\bm{c}({\bm x_0})^\top \nabla\bm{c}({\bm y_0})  - \nabla\bm{c}({\bm y_0})^\top \nabla\bm{c}({\bm y_0}) \| \nonumber \\
 &\leq \frac{1}{\sigma_0^4}\| \nabla \bm c(\bm x_0) - \nabla \bm c(\bm y_0) \|\big\{\| \nabla \bm c(\bm x_0) \| + \| \nabla \bm c(\bm{y}_0) \| \} \nonumber \\
 &\leq \frac{2\sqrt{\Gamma_1\Lambda_1}}{\sigma_0^4}\|\bm x_0 - \bm y_0 \|. \label{inverse_bound}
\end{align}
Therefore we can bound $\|  \nabla\bm{c}({\bm x_0})\{\nabla\bm{c}({\bm x_0})^\top \nabla\bm{c}({\bm x_0})\}^{-1} -  \nabla\bm{c}({\bm y_0})\{\nabla\bm{c}({\bm y_0})^\top \nabla\bm{c}({\bm y_0})\}^{-1} \|$ in (\ref{norm_bound_2}) by
\begin{align}
&\quad \|  \nabla\bm{c}({\bm x_0})\{\nabla\bm{c}({\bm x_0})^\top \nabla\bm{c}({\bm x_0})\}^{-1} -  \nabla\bm{c}({\bm y_0})\{\nabla\bm{c}({\bm y_0})^\top \nabla\bm{c}({\bm y_0})\}^{-1} \| \nonumber \\
&\leq \| \nabla \bm c(\bm x_0) - \nabla \bm c(\bm y_0) \| \{\nabla\bm{c}({\bm x_0})^\top \nabla\bm{c}({\bm x_0})\}^{-1} \| \nonumber \\
&\quad + \| \nabla \bm c(\bm y_0) \| \|\{\nabla\bm{c}({\bm x_0})^\top \nabla\bm{c}({\bm x_0})\}^{-1} - \{\nabla\bm{c}({\bm y_0})^\top \nabla\bm{c}({\bm y_0})\}^{-1} \| \nonumber \\
&\leq(\frac{\sqrt{\Lambda_1}}{\sigma^2_0} + \frac{2\Gamma_1\sqrt{\Lambda_1}}{\sigma_0^4}) \|\bm x_0 - \bm y_0 \| \label{bound_for_norm_bound_2}.
\end{align}
With the upper bounds for (\ref{norm_bound_1}) and (\ref{norm_bound_2}), we bound $\norm{\mc{P}_{\bm x_0}\bm\xi - \mc{P}_{\bm y_0}\bm\xi}{}$ by
\begin{align*}
&\quad \norm{\mc{P}_{\bm x_0}\bm\xi - \mc{P}_{\bm y_0}\bm\xi}{} \\
&\leq \norm{\bm\xi}{} \Big\{ \|\nabla\bm{c}({\bm x_0})\{\nabla\bm{c}({\bm x_0})^\top \nabla\bm{c}({\bm x_0})\}^{-1} \| \| \nabla\bm{c}({\bm x_0}) - \nabla\bm{c}({\bm y_0})\| \\
&\quad + \|  \nabla\bm{c}({\bm x_0})\{\nabla\bm{c}({\bm x_0})^\top \nabla\bm{c}({\bm x_0})\}^{-1} -  \nabla\bm{c}({\bm y_0})\{\nabla\bm{c}({\bm y_0})^\top \nabla\bm{c}({\bm y_0})\}^{-1} \| \| \nabla\bm{c}({\bm y_0}) \| \Big\} \\
&\leq \{\frac{\sqrt{\Gamma_1\Lambda_1}}{\sigma_0^2}\| \bm x_0 - \bm y_0 \| + \sqrt{\Gamma_1}(\frac{\sqrt{\Lambda_1}}{\sigma^2_0} + \frac{2\Gamma_1\sqrt{\Lambda_1}}{\sigma_0^4}) \|\bm x_0 - \bm y_0 \|\} \|\bm\xi\| \tag{by (\ref{result_part_1}), (\ref{single_bound}), and (\ref{bound_for_norm_bound_2})} \\
&= (\frac{2\sqrt{\Gamma_1\Lambda_1}}{\sigma_0^2} + \frac{2\sqrt{\Gamma_1^3\Lambda_1}}{\sigma_0^4})\|\bm x_0 - \bm y_0 \|\|\bm\xi\| \\
&\leq (\frac{2\sqrt{\Gamma_1\Lambda_1}}{\sigma_0^2} + \frac{2\sqrt{\Gamma_1^3\Lambda_1}}{\sigma_0^4}) \frac{8\|\bm\delta\|^3}{R}. \tag{$\|\bm x_0 - \bm y_0\|\leq 2 \| \bm \delta \|$ and $\| \bm \xi \| \leq 4\|\bm \delta \|^2 / R$}
\end{align*}
Let us go back to the task of bounding $\mc{G}(\bm x_0)^\top \bm\xi$. By (\ref{tough_bound}) and the inequality above, we have
\begin{align}
\mc{G}(\bm x_0)^\top \bm\xi &\leq \norm{\mc{G}(\bm{x}_0)}{}\norm{\mc{P}_{\bm{x}_0}\bm\xi - \mc{P}_{\bm{y}_0}\bm\xi}{} \nonumber\\
 &\leq \norm{\mc{G}(\bm{x}_0)}{}(\frac{2\sqrt{\Gamma_1\Lambda_1}}{\sigma_0^2} + \frac{2\sqrt{\Gamma_1^3\Lambda_1}}{\sigma_0^4}) \frac{8\|\bm\delta\|^3}{R} \nonumber \\
 &\leq \Big\{\|\nabla f({\bm x}_0)\| + \|\boldsymbol\lambda^*(\bm x_0)\|\sqrt{\sum_{i=1}^m\|\nabla c_i({\bm x}_0)\|^2}\Big\}(\frac{2\sqrt{\Gamma_1\Lambda_1}}{\sigma_0^2} + \frac{2\sqrt{\Gamma_1^3\Lambda_1}}{\sigma_0^4}) \frac{8\|\bm\delta\|^3}{R} \tag{by the definition of $\mc{G}(\bm x_0)$ and triangle inequality} \nonumber \\
 &\leq \{\gamma_{f, 1} + \frac{\gamma_{f, 1}}{\sigma_0^2}\Gamma_1\}(\frac{2\sqrt{\Gamma_1\Lambda_1}}{\sigma_0^2} + \frac{2\sqrt{\Gamma_1^3\Lambda_1}}{\sigma_0^4}) \frac{8\|\bm\delta\|^3}{R}. \tag{by (\ref{lambda_bound}) and Assumption 2} \nonumber \\
 &= C_4 \|\bm\delta\|^3, \label{part_3}
\end{align}
where $C_4 = \{\gamma_{f, 1} + \frac{\gamma_{f, 1}}{\sigma_0^2}\Gamma_1\}(\frac{2\sqrt{\Gamma_1\Lambda_1}}{\sigma_0^2} + \frac{2\sqrt{\Gamma_1^3\Lambda_1}}{\sigma_0^4}) \frac{8}{R}$.
Finally, we can bound the last three terms in \eqref{eqn:last_three_terms} by combining (\ref{part_1}), (\ref{part_2}, and (\ref{part_3}), that is,
\begin{align}
\vert \mc{G}(\bm x_0)^\top\bm \xi + \bm \delta^\top \{ \nabla^2_{\bm x \bm x}\mathcal{L}({\bm x_0}, \boldsymbol\lambda^\star({\bm x_0}))\}\bm \xi - \frac{1}{2}\bm \xi^\top \{ \nabla^2_{\bm x \bm x}\mathcal{L}({\bm x_0}, \boldsymbol\lambda^\star({\bm x_0}))\}\bm \xi \vert \leq (C_2 + C_3 + C_4)\| \bm\delta \|^3. \label{bound_last_three_terms}
\end{align}
Plugging (\ref{bound_last_three_terms}) into (\ref{expansion}) yields the final result, we obtain,
\begin{align*}
&\quad \vert f(\bm y_0) - f(\bm x_0) - \mc{G}(\bm x_0)^\top\bm\delta-\frac{1}{2}\bm\delta^\top\{\nabla^2_{\bm{xx}}\mc{L}(\bm x_0, \bm\lambda^*(\bm x_0))\}\bm\delta \vert \\
&=\vert f(\bm y_0) - f(\bm x_0) - \mc{G}(\bm x_0)^\top\bm\delta-\frac{1}{2}\bm\delta^\top\mc{H}(\bm x_0)\bm\delta \vert \tag{by $\mc{P}_{\bm x_0}\bm\delta = \bm\delta$ and definition of $\mc{H}(\bm{x}_0)$} \\
&\leq (8C_1 + C_2 + C_3 + C_4)\| \bm\delta \|^3.
\end{align*}

\end{proof}
\subsection{Riemannian gradient and Riemannian Hessian}\label{sub:relations}
Recall that  $\Omega = \{ {\bm x} \in \mr{R}^n \; \vert \; c_i({\bm x}) = 0, i= 1, \ldots, m\}$, $\nabla\bm{c}({\bm x}) = [\nabla c_1({\bm x}), \cdots, \nabla c_m({\bm x})]$ and $\sigma_{\min}({\nabla\bm c}({\bm x}))$ the minimum singular value of ${\nabla \bm c}({\bm x})$. Assume that $\inf\{\sigma_{\min}({\nabla \bm c}({\bm x})) \ \vert \ {\bm x} \in \Omega\} > \alpha$ for some $\alpha > 0$. For second-order differentiable functions $f$ and $c_i(\bm{x}), i = 1, \ldots, m$, let $\mc{L}({\bm x},\boldsymbol\lambda) = f({\bm x}) - \sum_{i=1}^m \lambda_i c_i({\bm x})$ and $\boldsymbol \lambda^\star({\bm x}) = \arg\!\min_{\boldsymbol\lambda}\|\nabla_{\bm x} \mc{L}({\bm x}, \boldsymbol\lambda)\|$. Define $\mc{G}({\bm x}) = \nabla_{\bm x} \mc{L}({\bm x}, \boldsymbol\lambda^\star)$ and $\mc{H}({\bm x}) = \nabla^2_{{\bm x\bm x}} \mc{L}({\bm x}, \boldsymbol\lambda^\star)$. Note that $\Omega$ can be considered as a $n-m$ dimensional Riemannian sub-manifold of $\mr{R}^n$ under the LICQ assumption. We will show that $\mc{G}(\cdot)$ and $\mc{H}(\cdot)$  are Riemannian gradient and Riemannian Hessian of $f$ over $\Omega$ in the following lemma. A similar argument can be found in \cite{absil2013extrinsic} and \cite{absil2009all}
\begin{lemma}
$\mc{G}(\cdot)$ and $\mc{H}(\cdot)$ defined in \ref{eqn:G} and \ref{eqn:H} are the Riemannian gradient and Riemannian Hessian of $f(\cdot)$ over $\Omega$.
\end{lemma}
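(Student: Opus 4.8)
The plan is to verify both identities directly from the extrinsic description of the Riemannian gradient and Hessian on an embedded submanifold, using the explicit formula \eqref{lambda_express} for the multiplier map. Equip $\Omega\subset\reals^n$ with the metric induced from the Euclidean inner product; under $\sigma_0$-LICQ, $\Omega$ is an $(n-m)$-dimensional submanifold. I would begin by recalling two standard facts (for which one can cite \cite{absil2013extrinsic, absil2009all}): the Riemannian gradient is the tangential component of the ambient gradient, $\mathrm{grad}\,f(\bm x)=\mc P_{\bm x}\nabla f(\bm x)$; and, since the ambient connection of $\reals^n$ is flat, the Levi--Civita connection of $\Omega$ is $\nabla^\Omega_{\bm\xi}Y=\mc P_{\bm x}\big(\mathrm{D}Y(\bm x)[\bm\xi]\big)$ for $\bm\xi\in\mc T_\Omega(\bm x)$ and $Y$ a tangent vector field along $\Omega$ (Gauss's formula), where $\mathrm{D}$ is the ordinary directional derivative computed along any curve in $\Omega$ with velocity $\bm\xi$. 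Consequently the Riemannian Hessian operator on $\mc T_\Omega(\bm x)$ is $\mathrm{Hess}\,f(\bm x)[\bm\xi]=\nabla^\Omega_{\bm\xi}\mathrm{grad}\,f=\mc P_{\bm x}\big(\mathrm{D}(\mathrm{grad}\,f)(\bm x)[\bm\xi]\big)$. So it suffices to prove (i) $\mc G(\bm x)=\mc P_{\bm x}\nabla f(\bm x)$ and (ii) $\mc H(\bm x)\bm\xi=\mc P_{\bm x}\big(\mathrm{D}(\mc G)(\bm x)[\bm\xi]\big)$ for every $\bm\xi\in\mc T_\Omega(\bm x)$.

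For (i), I would plug the closed form \eqref{lambda_express} — valid because $\sigma_0$-LICQ makes $\nabla\bm c(\bm x)^\top\nabla\bm c(\bm x)$ invertible — into the definition of $\mc G$:
\begin{align*}
\mc G(\bm x)&=\nabla_{\bm x}\mc L(\bm x,\bm\lambda^\star(\bm x))=\nabla f(\bm x)-\nabla\bm c(\bm x)\bm\lambda^\star(\bm x)\\
&=\Big(I-\nabla\bm c(\bm x)\big(\nabla\bm c(\bm x)^\top\nabla\bm c(\bm x)\big)^{-1}\nabla\bm c(\bm x)^\top\Big)\nabla f(\bm x),
\end{align*}
and then observe that $\nabla\bm c(\bm x)\big(\nabla\bm c(\bm x)^\top\nabla\bm c(\bm x)\big)^{-1}\nabla\bm c(\bm x)^\top$ is exactly the orthogonal projector onto $\mf R(\nabla\bm c(\bm x))=\mc N_\Omega(\bm x)$, so the matrix in parentheses equals $I-\mc P_{\mc N_\Omega(\bm x)}=\mc P_{\bm x}$. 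This gives $\mc G(\bm x)=\mc P_{\bm x}\nabla f(\bm x)=\mathrm{grad}\,f(\bm x)$ and, as a bonus, shows $\mc G$ is a tangent vector field along $\Omega$, which is what makes $\nabla^\Omega_{\bm\xi}\mc G$ meaningful.

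For (ii), fix $\bm x\in\Omega$, $\bm\xi\in\mc T_\Omega(\bm x)$, and take a smooth curve $\gamma$ in $\Omega$ with $\gamma(0)=\bm x$, $\dot\gamma(0)=\bm\xi$. Along $\Omega$ one has $\mc G(\gamma(t))=\nabla_{\bm x}\mc L(\gamma(t),\bm\lambda^\star(\gamma(t)))$, with $t\mapsto\bm\lambda^\star(\gamma(t))$ differentiable by \eqref{lambda_express}. Differentiating at $t=0$ by the chain rule gives
\[
\mathrm{D}(\mc G)(\bm x)[\bm\xi]=\nabla^2_{\bm x\bm x}\mc L(\bm x,\bm\lambda^\star(\bm x))\,\bm\xi+\nabla^2_{\bm x\bm\lambda}\mc L(\bm x,\bm\lambda^\star(\bm x))\,(\bm\lambda^\star\circ\gamma)'(0),
\]
and since $\nabla^2_{\bm x\bm\lambda}\mc L(\bm x,\bm\lambda)=-\nabla\bm c(\bm x)$, the second summand lies in $\mf R(\nabla\bm c(\bm x))=\mc N_\Omega(\bm x)$ and is annihilated by $\mc P_{\bm x}$. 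Using $\mc P_{\bm x}\bm\xi=\bm\xi$ and the symmetry $\mc P_{\bm x}^\top=\mc P_{\bm x}$ of an orthogonal projector,
\begin{align*}
\mathrm{Hess}\,f(\bm x)[\bm\xi]&=\mc P_{\bm x}\,\nabla^2_{\bm x\bm x}\mc L(\bm x,\bm\lambda^\star(\bm x))\,\bm\xi\\
&=\mc P_{\bm x}^\top\,\nabla^2_{\bm x\bm x}\mc L(\bm x,\bm\lambda^\star(\bm x))\,\mc P_{\bm x}\,\bm\xi=\mc H(\bm x)\,\bm\xi,
\end{align*}
with $\mc H$ as in \eqref{eqn:H}; equivalently, on tangent vectors the associated quadratic form is $\bm\delta\mapsto\bm\delta^\top\mc H(\bm x)\bm\delta=\bm\delta^\top\nabla^2_{\bm x\bm x}\mc L(\bm x,\bm\lambda^\star(\bm x))\bm\delta$, which reconciles the two expressions used for $\mc H$ in the text. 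That the right-hand side does not depend on the choice of $\gamma$ is automatic from the general theory.

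I do not expect a genuine obstacle here: the whole argument is a one-line projector identity plus one chain rule. The only things worth a sentence of care are that $\bm x\mapsto\bm\lambda^\star(\bm x)$ is differentiable — immediate from \eqref{lambda_express} and $\sigma_0$-LICQ — and, crucially, that one never needs its derivative explicitly, since $\mathrm{D}\bm\lambda^\star$ only enters through a normal-space vector that $\mc P_{\bm x}$ kills; and that the claimed equality is between operators (or quadratic forms) on $\mc T_\Omega(\bm x)$, so the mismatch between $\nabla^2_{\bm x\bm x}\mc L$ and $\mc P_{\bm x}^\top\nabla^2_{\bm x\bm x}\mc L\,\mc P_{\bm x}$ on all of $\reals^n$ is irrelevant. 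If one would rather avoid curves, the identical computation goes through using the extrinsic formula $\mathrm{Hess}\,f(\bm x)[\bm\xi]=\mc P_{\bm x}\,\mathrm{D}\widetilde{G}(\bm x)[\bm\xi]$ with $\widetilde{G}(\bm y)=\nabla_{\bm x}\mc L(\bm y,\bm\lambda^\star(\bm y))$ a smooth ambient extension of the tangential gradient field, defined on a neighborhood where $\nabla\bm c$ has full column rank.
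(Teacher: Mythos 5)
Your proof is correct and follows essentially the same route as the paper's: both identify $\mc{G}(\bm x)$ as the tangential projection of $\nabla f(\bm x)$, and both obtain the Riemannian Hessian by projecting the ambient directional derivative of the gradient field (the paper invokes Proposition 5.3.2 of \cite{AMS09} for exactly the Gauss-formula identity you use), with the multiplier-derivative term annihilated by $\mc{P}_{\bm x}$ because it lies in $\mf R(\nabla \bm c(\bm x))$. The only cosmetic difference is that you verify $\mc{G}(\bm x)=\mc{P}_{\bm x}\nabla f(\bm x)$ by the explicit projector identity obtained from \eqref{lambda_express}, whereas the paper tests $\langle \mc{G}(\bm x),\bm\eta\rangle = Df(\bm x)[\bm\eta]$ against tangent vectors $\bm\eta$.
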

\begin{proof}
Let $\text{grad} f({\bm x})$ denote the Riemannian gradient of $f$ at ${\bm x} \in \Omega$. By definition, for every $\eta \in \mc{T}_\Omega({{\bm x}})$, $Df({\bm x})[\eta] = \langle \text{grad} f({\bm x}), \eta \rangle_{\bm x}$ where $Df({\bm x})[\eta]$ denotes the directional derivative of $f$ at ${\bm x}$ along the direction $\eta$ and $\langle \cdot, \cdot \rangle_{\bm x}$ denotes the Riemannian metric on $\mc{T}_\Omega({{\bm x}})$. Since $\Omega$ is an embedded Riemannian sub-manifold of $\mr{R}^n$, $\langle \cdot, \cdot \rangle_{\bm x}$ coincides with the Euclidean inner product. Therefore $\mc{G}({\bm x})$ is the projection of $\nabla f({\bm x})$ onto $\mc{T}_{\Omega}({{\bm x}})$. Note that for every $\boldsymbol\eta \in \mc{T}_{\Omega}({{\bm x}})$, we have ${\nabla\bm c}({\bm x})\eta = \boldsymbol 0$. Then, we have
\begin{align*}
\langle \mc{G}({\bm x}), \boldsymbol\eta \rangle &= \langle \nabla f({\bm x}) - {\nabla\bm c}({\bm x})\boldsymbol\lambda^\star({\bm x}), \boldsymbol\eta \rangle = \langle \nabla f({\bm x}), \boldsymbol\eta \rangle - \boldsymbol\lambda^\star({\bm x})^\top{\nabla \bm c}({\bm x})^\top\boldsymbol\eta = \langle \nabla f({\bm x}), \boldsymbol\eta \rangle = \text{D}f({\bm x})[\boldsymbol\eta].
\end{align*}
The last equality follows from the definition of directional derivative. Therefore $\langle \text{grad} f({\bm x}), \boldsymbol\eta \rangle_{\bm x} = \langle \mc{G}({\bm x}), \boldsymbol\eta\rangle_{\bm x}$ for every $\boldsymbol\eta \in \mc{T}_\Omega({{\bm x}})$, that is, $\mc{G}({\bm x}) = \text{grad}f({\bm x})$. Next, we will show that $\mc{H}({\bm x})$ is the Riemannian Hessian of $f$.

  Let $\text{Hess}f({\bm x})$ denote the Riemannian Hessian of $f$ at ${\bm x} \in \Omega$. By definition, for all ${\bm \xi}, \boldsymbol\eta \in \mc{T}_\Omega({\bm x})$, $\text{Hess}f({\bm x})[{\bm \xi}, \boldsymbol\eta] = \langle \overline{\nabla}_\xi\text{grad}f({\bm x}), \boldsymbol\eta \rangle_{\bm x}$ where $\overline{\nabla}$ denotes the Riemannian connection on $\Omega$. First note that under the LICQ assumption, ${\nabla \bm c}({\bm x})^\top{\nabla\bm c}({\bm x})$ is invertible for every ${\bm x} \in \Omega$ and straight forward calculation gives $\boldsymbol\lambda^\star({\bm x}) = ({\nabla \bm c}({\bm x})^\top{\nabla\bm c}({\bm x}))^{-1}{\nabla\bm c}({\bm x})^\top\nabla f({\bm x})$. By the inverse function theorem, $\boldsymbol\lambda^\star({\bm x})$ is differentiable. By Proposition 5.3.2 in \cite{AMS09} and $\mc{G}({\bm x}) = \text{grad} f({\bm x})$,
\begin{align*}
\overline{\nabla}_\xi\text{grad}f({\bm x}) &= \mc{P}_{\bm{x}}(\text{D}\ \text{grad}f({\bm x})[\bm\xi]) \\
&= \mc{P}_{\bm{x}}(\nabla \mc{G}(\mb(x)) \bm\xi) \\
&= \mc{P}_{\bm{x}}\{\nabla^2 f({\bm x})\bm\xi -\sum_{i=1}^m (\nabla\lambda^\star_i({\bm x})\nabla c_i({\bm x})^\top\bm\xi + \lambda^\star_i\nabla^2c_i({\bm x})\bm\xi ) \} \\
&= \mc{P}_{\bm{x}}\{\nabla^2 f({\bm x})\bm\xi -\sum_{i=1}^m\lambda^\star_i\nabla^2c_i({\bm x})\bm\xi \} \\
&= \mc{P}_{\bm{x}}\nabla_{{\bm x\bm x}}^2 \mc{L}({\bm x}, \boldsymbol\lambda^\star({\bm x}))\bm\xi\\
&= \mc{P}_{\bm{x}}\mc{H}({\bm x})\mc{P}_{\bm{x}}\bm\xi.
\end{align*}	
Therefore $\text{Hess}f({\bm x})[{\bm \xi}, \boldsymbol\eta] = \mc{H}({\bm x})({\bm \xi}, \boldsymbol\eta)$.
\end{proof}

\section*{Acknowledgment}
This research was supported in part by NSF Grant CCF1527809.

\bibliographystyle{unsrt}
\bibliography{curvilinear}
\end{document}